\newtheorem{theorem}{Theorem}[section]
\newtheorem{corollary}[theorem]{Corollary}
\newtheorem{proposition}[theorem]{Proposition}
\newtheorem{lemma}[theorem]{Lemma}
\theoremstyle{definition}
\newtheorem{definition}[theorem]{Definition}
\theoremstyle{remark}
\newtheorem{remark}[theorem]{Remark}
\newtheorem{example}[theorem]{Example}
\newcommand{\dist}{\operatorname{dist}}
\newcommand{\diam}{\operatorname{diam}}
\newcommand{\acr}{\newline\indent}
\newcommand{\RR}{\mathbb{R}}
\newcommand{\NN}{\mathbb{N}_0}
\newcommand{\CC}{\mathbb{C}}
\newcommand{\bB}{\overline{\mathbf{B}}}
\let\ol\overline
\newcommand{\myeq}{\stackrel{r}{=}}
\numberwithin{equation}{section}
\renewcommand{\p@enumii}{}
\begin{document}

\title{Bipartite graphs and best proximity pairs}

\author{Karim Chaira, Oleksiy Dovgoshey, and Samih Lazaiz}

\address{Karim Chaira\acr
Ben M'sik Faculty of Sciences\acr
Hassan II University, Casablanca, Morocco}
\email{chaira\_karim@yahoo.fr}

\address{Oleksiy Dovgoshey\acr
Institute of Applied Mathematics and Mechanics of NASU\acr
Dobrovolskogo str. 1, Slovyansk 84100, Ukraine}
\email{oleksiy.dovgoshey@gmail.com}

\address{Samih Lazaiz\acr
ENSAM Casablanca, Hassan II University\acr
Casablanca, Morocco}
\email{samih.lazaiz@gmail.com}

\subjclass[2020]{Primary: 05C60. Secondary: 54E35; 41A50}

\keywords{Best proximity pair; bipartite graph; complete bipartite graph; proximinal set; semimetric space; ultrametric space} 

\begin{abstract}
We say that a bipartite graph \(G(A, B)\) with fixed parts \(A\), \(B\) is proximinal if there is a semimetric space \((X, d)\) such that \(A\) and \(B\) are disjoint proximinal subsets of \(X\) and all edges \(\{a, b\}\) satisfy the equality \(d(a, b) = \dist(A, B)\). It is proved that a bipartite graph \(G\) is not isomorphic to any proximinal graph iff \(G\) is finite and empty. It is also shown that the subgraph induced by all non-isolated vertices of a nonempty bipartite graph \(G\) is a disjoint union of complete bipartite graphs iff \(G\) is isomorphic to a nonempty proximinal graph for an ultrametric space.
\end{abstract}

\maketitle

\section{Introduction and preliminaries}

Let \(X\) be a set. A \emph{semimetric} on \(X\) is a function \(d\colon X \times X \to [0, \infty)\) such that \(d(x, y) = d(y, x)\) and \((d(x, y) = 0) \Leftrightarrow (x = y)\) for all \(x\), \(y \in X\). A pair \((X, d)\), where \(d\) is a semimetric on \(X\), is called a \emph{semimetric space} (see, for example, \cite[p.~7]{Blumenthal1953}). A semimetric \(d\) is a \emph{metric} if the \emph{triangle inequality} 
\[
d(x, y) \leqslant d(x, z) + d(z, y)
\]
holds for all \(x\), \(y\), \(z \in X\). A metric is an \emph{ultrametric} if we have the \emph{strong triangle inequality} 
\[
d(x, y) \leqslant \max\{d(x, z), d(z, y)\}
\]
instead of the triangle one. We shall denote by \(\mathbf{SM}\), \(\mathbf{M}\) and \(\mathbf{UM}\) the classes of all nonvoid semimetric spaces, nonvoid metric spaces and, respectively, ultrametric ones. In what follows, we will write \(D(X, d)\) for the \emph{distance set} of \((X, d) \in \mathbf{SM}\),
\[
D(X, d) := \{d(x, y) \colon x, y \in X\}.
\]
and denote by \(\diam (A)\) the diameter of set \(A \subseteq X\),
\[
\diam (A) := \sup\{d(x, y) \colon x, y \in A\}.
\]

Let \((X, d)\) be a semimetric space. A \emph{closed ball} with \emph{radius} \(r \geqslant 0\) and a \emph{center} \(c \in X\) is the set 
\[
\overline{B}_r(c) := \{x \in X \colon d(c, x) \leqslant r\}. 
\]
We will denote by \(\bB_{X}\) the set of all closed balls in \((X, d)\).

Let \(A\) and \(B\) be two nonempty subsets of a semimetric space \(X\). If \(A\) and \(B\) are disjoint, \(A \cap B = \varnothing\), it is possible to consider the problem of finding a point \(x\in A\) which is as close as possible to \(B\). Problems of this type are typical for Approximation Theory. 

The following is a ``semimetric'' modification of the corresponding definition from \cite{ref12}.

\begin{definition}\label{d1.1}
Let \((X, d) \in \mathbf{SM}\). A set \(A \subset X\) is said to be \emph{proximinal} in \((X, d)\) if, for every \(x\in X\), there exists \(a_0\in A\) such that 
\begin{equation}\label{d1.1:e1}
d(x,a_0) = \inf\{d(x, a)\colon a\in A\}.
\end{equation}
The point \(a_0\), if it exists, is called a \emph{best approximation} to \(x\) in \(A\).
\end{definition}

\begin{remark}
Since every \((X, d) \in \mathbf{SM}\) is nonempty, Definition~\ref{d1.1} implies that all proximinal sets also are nonempty.
\end{remark}

Let \(A\) and \(B\) be subsets of a space \((X, d) \in \mathbf{SM}\). We will say that the pair \((A, B)\) is \emph{proximinal} if \(A\) and \(B\) are proximinal in \((X, d)\).

Some results connected with existence of the best approximations in metric spaces can be found in \cite{ref9,ref10,ref11, Sch1985}. 

For nonempty subsets \(A\) and \(B\) of a semimetric space \((X,d)\), we define a distance from \(A\) to \(B\) as
\begin{equation}\label{e1.2}
\dist(A,B) := \inf\{d(a,b)\colon a\in A\ \text{and}\ b\in B\}.
\end{equation}
If \(A\) is a one-point set, \(A = \{a\}\), then, for brevity, we write \(\dist(a, B)\) instead of \(\dist(\{a\}, B)\).

It should be noted here that if \(A\) and \(B\) are infinite proximinal subsets in a semimetric space \((X, d)\), then, in general, there is no reason to have \(d(a, b) = \dist(A, B)\) for some \(a \in A\) and \(b \in B\). This led to the notion of the best proximity pairs. For example, in the proof of Theorem~\ref{t2.1} \(A\) and \(B\) are infinite proximinal subsets of an ultrametric space \((X, d_1)\), but \(d(a, b) > \dist(A, B)\) for all \(a \in A\) and \(b \in B\).

\begin{definition}\label{d1.2}
Let \((X, d) \in \mathbf{SM}\), and let \(A\) and \(B\) be nonempty subsets of \(X\). Write
\begin{gather}\label{d1.2:e1}
A_0 :=\{x\in A\colon d(x, y) = \dist(A, B)\ \text{for some}\ y\in B\},\\
\label{d1.2:e2}
B_0 := \{y\in B\colon d(x, y) = \dist(A, B)\ \text{for some}\ x\in A\}.
\end{gather}
A pair \((a_0, b_0) \in A_0 \times B_0\) for which \(d(a_0, b_0) = \dist(A, B)\) is called a \emph{best proximity pair} for the sets \(A\) and \(B\).
\end{definition}

For the case when \((X, d) \in \mathbf{M}\), Definition~\ref{d1.2} becomes Definition~1.1 from~\cite{ref12}.

\begin{remark}\label{r1.4}
It is easy to see that the following conditions are equivalent for all nonempty subsets \(A\), \(B\) of each \((X, d) \in \mathbf{SM}\):
\begin{itemize}
\item \(A_0 \neq \varnothing\);
\item \(B_0 \neq \varnothing\);
\item There is a best proximity pair for \(A\) and \(B\).
\end{itemize}
If \((a_0, b_0) \in A_0 \times B_0\) is a best proximity pair for \(A\) and \(B\), then, using the inclusions \(A_0 \subseteq A\), \(B_0 \subseteq B\) and formula~\eqref{e1.2}, we obtain 
\[
d(a_0, b_0) \geqslant \dist (A_0, B_0) \geqslant \dist (A, B) = d(a_0, b_0).
\]
Thus, the equality \(\dist (A_0, B_0) = \dist (A, B)\) holds and a pair \((x, y) \in A_0 \times B_0\) is a best proximity pair for \(A\) and \(B\) if and only if \((x, y)\) is a best proximity pair for \(A_0\) and \(B_0\).
\end{remark}

The next basic for us concept is the notion of graph.

A \emph{simple graph} is a pair \((V, E)\) consisting of a nonempty set \(V\) and a set \(E\) whose elements are unordered pairs of different elements of \(V\). In what follows, we will consider the simple graphs only.

For a graph \(G = (V, E)\), the sets \(V = V (G)\) and \(E = E(G)\) are called the \emph{set of vertices} and the \emph{set of edges}, respectively. Two vertices \(u\), \(v \in V\) are \emph{adjacent} if \(\{u, v\}\) is an edge in \(G\). A vertex \(v \in V(G)\) is \emph{isolated} if there are no vertices which are adjacent with \(v\) in \(G\). We say that \(G\) is \emph{empty} if \(E(G) = \varnothing\). Thus, \(G\) is empty iff all vertices of \(G\) are isolated. 

A graph \(H\) is, by definition, a \emph{subgraph} of a graph \(G\) if the inclusions \(V (H) \subseteq V (G) \) and \(E(H) \subseteq E(G)\) are valid. 

If \(G\) is a nonempty graph, then we will denote by \(G'\) a subgraph of \(G\) whose vertices are non-isolated vertices of \(G\) and such that \(E(G') = E(G)\).

\begin{remark}\label{r1.5}
The graph \(G'\) can be characterized by the following extremal property: If \(G' \subseteq H \subseteq G\) holds and \(H\) does not have any isolated vertices, then \(G' = H\). It is easy to see that \(V(G')\) is the union of all two-point sets \(\{a, b\} \in E(G)\).
\end{remark}

A graph \(G\) is \emph{finite} if \(V (G)\) is a finite set, \(|V (G)| < \infty\). A \emph{path} is a finite nonempty graph \(P\) whose vertices can be numbered so that
\[
V (P) = \{x_0, x_1, \ldots, x_k\}, \quad k \geqslant 1, \quad \text{and} \quad E(P) = \{\{x_0, x_1\}, \ldots, \{x_{k-1}, x_k\}\}.
\]
In this case we say that \(P\) is a path joining \(x_0\) and \(x_k\). A graph \(G\) is \emph{connected} if, for every two distinct \(u\), \(v \in V (G)\), there is a path \(P \subseteq G\) joining \(u\) and~\(v\).

Let \(\mathcal{F}\) be a set of graphs such that \(|\mathcal{F}| \geqslant 2\) and \(V(G_1) \cap V(G_2) = \varnothing\) for all distinct \(G_1\), \(G_2 \in \mathcal{F}\). A graph \(H\) is called the \emph{disjoint union} of graphs \(G \in \mathcal{F}\) if
\[
V(H) = \bigcup_{G \in \mathcal{F}} V(G) \quad \text{and} \quad E(H) = \bigcup_{G \in \mathcal{F}} E(G).
\]
It is easy to prove that a graph is connected iff it is not a disjoint union of some graphs.

\begin{definition}\label{d1.4}
A graph \(G\) is \emph{bipartite} if the vertex set \(V(G)\) can be partitioned into two nonvoid disjoint sets, or \emph{parts}, in such a way that no edge has both ends in the same part. A bipartite graph in which every two vertices from different parts are adjacent is called \emph{complete bipartite}.
\end{definition}

Below we will consider the bipartite graphs having the vertex sets of arbitrary cardinality.

Let us introduce now a notion of \emph{proximinal graph}.

\begin{definition}\label{d1.5}
A bipartite graph \(G = G(A, B)\) with fixed parts \(A\) and \(B\) is proximinal if there exists \((X, d) \in \mathbf{SM}\) such that \(A\) and \(B\) are disjoint proximinal subsets of \(X\), and the equivalence
\begin{equation}\label{d1.5:e1}
\bigl(\{a, b\} \in E(G)\bigr) \Leftrightarrow \bigl(d(a, b) = \dist(A, B)\bigr)
\end{equation}
is valid for every \(a \in A\) and every \(b \in B\). In this case we write
\[
G = G_X(A, B) = G_{X, d}(A, B)
\]
and say that \(G\) is proximinal for \((X, d)\).
\end{definition}

\begin{remark}\label{r1.7}
If \(G\) is a nonempty proximinal graph with parts \(A\) and \(B\), then it follows directly from the definitions that the equality \(V(G') = A_0 \cup B_0\) holds, where \(A_0\) and \(B_0\) are defined by~\eqref{d1.2:e1} and \eqref{d1.2:e2}, respectively. Moreover, vertices \(a\), \(b \in V(G)\) are adjacent iff \((a, b)\) is a best proximity pair for \(A\) and \(B\).
\end{remark}

Let \(G = G_{X, d}(A, B)\) be a proximinal graph for a semimetric space \((X, d)\). If we define \(\rho \colon X \times X \to [0, \infty)\) as
\[
\rho(x, y) = \begin{cases}
0 & \text{if } x = y,\\
c + d(x, y) & \text{if } x \neq y,
\end{cases}
\]
where \(c > 0\) is arbitrary, then \(\rho\) is a new semimetric on \(X\), \(A\) and \(B\) are disjoint proximinal sets in \((X, \rho)\), and \(G\) is proximinal for \((X, \rho)\), and \(G_{X, d}(A, B) = G_{X, \rho}(A, B)\). Thus, if in Definition~\ref{d1.5} we replace the condition \(A \cap B = \varnothing\) to the more strong condition 
\[
\dist(A, B) > 0,
\]
then the new definition will be equivalent to the original one.

Now we recall the concept of isomorphic graphs.

\begin{definition}\label{d1.6}
Let \(G_1\) and \(G_2\) be graphs. A bijection \(f \colon V(G_1) \to V(G_2)\) is an \emph{isomorphism} of \(G_1\) and \(G_2\) if
\[
(\{u, v\} \in E(G_1)) \Leftrightarrow (\{f(u), f(v)\} \in E(G_2))
\]
is valid for all \(u\), \(v \in V(G_1)\). The graphs \(G_1\) and \(G_2\) are \emph{isomorphic} if there exists an isomorphism of \(G_1\) and \(G_2\). 
\end{definition}

Here we use a classical example to illustrate the concept of proximinal graphs.

\begin{figure}[ht]
\begin{tikzpicture}[scale=1,
arrow/.style = {-{Stealth[length=5pt]}, shorten >=2pt}]
\def\xx{1.8cm}
\def\yy{0.9cm}
\coordinate [label=above:{$(1, 0, 0)$}] (A1) at (0*\xx, \yy);
\coordinate [label=above:{$(0, 1, 0)$}] (A2) at (1*\xx, \yy);
\coordinate [label=above:{$(0, 0, 1)$}] (A3) at (2*\xx, \yy);
\coordinate [label=above:{$(1, 1, 1)$}] (A4) at (3*\xx, \yy);

\coordinate [label=below:{$(1, 1, 0)$}] (B1) at (0*\xx, -\yy);
\coordinate [label=below:{$(1, 0, 1)$}] (B2) at (1*\xx, -\yy);
\coordinate [label=below:{$(0, 1, 1)$}] (B3) at (2*\xx, -\yy);
\coordinate [label=below:{$(0, 0, 0)$}] (B4) at (3*\xx, -\yy);

\draw [fill, black] (A1) circle (2pt);
\draw [fill, black] (A2) circle (2pt);
\draw [fill, black] (A3) circle (2pt);
\draw [fill, black] (A4) circle (2pt);
\draw [fill, black] (B1) circle (2pt);
\draw [fill, black] (B2) circle (2pt);
\draw [fill, black] (B3) circle (2pt);
\draw [fill, black] (B4) circle (2pt);

\draw (A1) -- (B1) -- (A2) -- (B3) -- (A3) -- (B4) -- (A1) -- (B2) -- (A3);
\draw (A2) -- (B4);
\draw (B1) -- (A4) -- (B2);
\draw (A4) -- (B3);
\draw (0, 1cm +\yy) node {\(G_{X, d}(A, B)\)};

\def\xx{1.7cm}
\begin{scope}[xshift=8.5cm, yshift=-\yy]
\coordinate [label=above:{$(1, 0, 0)$}] (a1) at (0*\xx, 2*\yy);
\coordinate [label=above:{$(1, 0, 1)$}] (a2) at (3*\xx, 2*\yy);
\coordinate [label=below:{$(0, 0, 1)$}] (a3) at (3*\xx, -2*\yy);
\coordinate [label=below:{$(0, 0, 0)$}] (a4) at (0*\xx, -2*\yy);

\coordinate [label=above:{$(1, 1, 0)$}] (b1) at (1*\xx, \yy);
\coordinate [label=above:{$(1, 1, 1)$}] (b2) at (2*\xx, \yy);
\coordinate [label=below:{$(0, 1, 1)$}] (b3) at (2*\xx, -\yy);
\coordinate [label=below:{$(0, 1, 0)$}] (b4) at (1*\xx, -\yy);

\draw [fill, black] (a1) circle (2pt);
\draw [fill, black] (a2) circle (2pt);
\draw [fill, black] (a3) circle (2pt);
\draw [fill, black] (a4) circle (2pt);
\draw [fill, black] (b1) circle (2pt);
\draw [fill, black] (b2) circle (2pt);
\draw [fill, black] (b3) circle (2pt);
\draw [fill, black] (b4) circle (2pt);

\draw (a1) -- (a2) -- (a3) -- (a4) -- (a1) -- (b1);
\draw (a2) -- (b2) -- (b3) -- (b4) -- (b1) -- (b2);
\draw (a3) -- (b3);
\draw (a4) -- (b4);
\draw (0, 1cm + 2*\yy) node {\(Q_3\)};
\end{scope}
\end{tikzpicture}
\caption{}\label{fig1}
\end{figure}

\begin{example}\label{ex1.10}
Let \(X\) be the set of all sequences \(\widetilde{q} = (q_1, q_2, q_3)\), where each \(q_i \in \{0, 1\}\). Let us denote by \(d(\widetilde{p}, \widetilde{q})\) the Hamming distance between \(\widetilde{p}\), \(\widetilde{q} \in X\),
\[
d(\widetilde{p}, \widetilde{q}) = \sum_{i=1}^3 |p_i - q_i|.
\]
Then \((X, d)\) is a metric space, and the sets
\[
A = \{(1, 0, 0), (0, 1, 0), (0, 0, 1), (1, 1, 1)\}, \quad 
B = \{(1, 1, 0), (1, 0, 1), (0, 1, 1), (0, 0, 0)\} 
\]
are disjoint proximinal subsets of \((X, d)\). Since the equality \(\dist(A, B) = 1\) holds, two sequences \(\widetilde{p} = (p_1, p_2, p_3)\) and \(\widetilde{q} = (q_1, q_2, q_3)\) are adjacent in \(G_{X, d}(A, B)\) iff they are different in exactly one place. The proximinal graph \(G_{X, d}(A, B)\) coincides, up to isomorphism, with the graph of the cube \(Q_3\) (see Figure~\ref{fig1}).
\end{example}

The goal of the paper is to characterize the proximinal graphs for semimetric, metric and ultrametric spaces. Theorem~\ref{t2.1} describes the structure of bipartite graphs which are proximinal for semimetric and metric spaces. Corollary~\ref{c2.2} of this theorem shows that a bipartite graph \(G\) is not isomorphic to any proximinal graph iff \(G\) is finite and empty. The structure of bipartite graphs, which are proximinal for ultrametric spaces, is completely described in Theorem~\ref{t2.9}. Corollary~\ref{c2.13} characterizes up to isomorphism the proximinal graphs for ultrametric spaces via disjoint unions of complete bipartite graphs.

In the last section of the paper, we introduce the farthest graphs \(G\) as bipartite graphs with fixed parts \(A\), \(B\) for which there are semimetric spaces \((X, d)\) such that \(A\) and \(B\) are disjoint subsets of \(X\), and two points \(a \in A\) and \(b \in B\) are adjacent in \(G\) iff they are maximally distant from each other. The structure of farthest graphs is described in Theorem~\ref{t3.2}. In Proposition~\ref{p3.4} it is shown that the farthest graphs and the proximinal graphs are the same up to isomorphism.

A special kind of bipartite graphs, the trees, gives a natural language for description of ultrametric spaces \cite{Carlsson2010, DLW, Fie, GV2012DAM, HolAMM2001, H04, BH2, Lemin2003, Bestvina2002, DDP2011pNUAA, DP2019PNUAA, DPT2017FPTA, DPT2015, Pet2018pNUAA, DP2018pNUAA, Dov2020TaAoG, BS2017, DP2020pNUAA, DKa2021, Dov2019pNUAA, DP2013SM, PD2014JMS}, but the authors are aware of only papers \cite{BDK2021a} and \cite{PD2014JMS}, in which complete bipartite and, more generally, complete multipartite graphs are systematically used to study ultrametric spaces. We note also that \cite{SLA2020IJoMaMS} and \cite{SV2017AGT} contain some results describing the behavior of nonexpansive mappings and best proximity pairs in the language of directed graphs.

\section{Characterization of proximinal graphs}

We start by characterizing the proximinal graphs corresponding to the general semimetric and metric spaces.

\begin{theorem}\label{t2.1}
Let \(G\) be a bipartite graph with fixed parts \(A\) and \(B\). Then the following statements are equivalent:
\begin{enumerate}
\item\label{t2.1:s1} Either \(G\) is nonempty or \(G\) is empty but \(A\) and \(B\) are infinite.
\item\label{t2.1:s2} \(G\) is proximinal for a metric space.
\item\label{t2.1:s3} \(G\) is proximinal for a semimetric space.
\end{enumerate}
\end{theorem}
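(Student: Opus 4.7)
My plan is to prove the circular chain $(\mathrm{ii}) \Rightarrow (\mathrm{iii}) \Rightarrow (\mathrm{i}) \Rightarrow (\mathrm{ii})$. The implication $(\mathrm{ii}) \Rightarrow (\mathrm{iii})$ is immediate since every metric is a semimetric. For $(\mathrm{iii}) \Rightarrow (\mathrm{i})$ I argue by contraposition: if $G = G_{X, d}(A, B)$ is empty for some $(X, d) \in \mathbf{SM}$ and, say, $A$ is finite, then proximinality of $B$ gives for each $a \in A$ a point $b_a \in B$ with $d(a, b_a) = \dist(a, B)$, so
\[
\dist(A, B) \;=\; \inf_{a \in A}\dist(a, B) \;=\; \min_{a \in A} d(a, b_a).
\]
This minimum is attained because $A$ is finite, and the minimising pair is a best proximity pair, contradicting the emptiness of $G$ in view of Remark~\ref{r1.7}. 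The same argument with $A$ and $B$ swapped handles the case in which $B$ is finite.

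The substantive content lies in $(\mathrm{i}) \Rightarrow (\mathrm{ii})$, which I would split into two constructive cases on $X := A \cup B$. When $G$ is nonempty, take a $\{0, 1, 2\}$-valued function $d$ defined by $d(x, y) = 0 \Leftrightarrow x = y$; $d(x, y) = 1$ whenever $x \neq y$ lie in the same part; and, for $(a, b) \in A \times B$, $d(a, b) = 1$ if $\{a, b\} \in E(G)$ and $d(a, b) = 2$ otherwise. All nonzero distances lie in $\{1, 2\}$, so the triangle inequality reduces to $2 \leq 1 + 1$. Best approximations exist by inspection: any $x \in A$ is its own best approximation in $A$; if $x \in B$ has a neighbour in $A$, then that neighbour attains $d(x, \cdot) = 1$, and if $x$ is isolated in $G$ then $d(x, a) = 2$ for every $a \in A$, so any point of $A$ works; a symmetric argument handles $B$. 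Since $E(G) \neq \varnothing$ one has $\dist(A, B) = 1$, and by construction $d(a, b) = 1 \Leftrightarrow \{a, b\} \in E(G)$, so $G = G_{X, d}(A, B)$.

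For the remaining case, $G$ empty with $A, B$ both infinite, pick distinct sequences $\{a_n\}_{n \geq 1} \subseteq A$ and $\{b_n\}_{n \geq 1} \subseteq B$ and, on $X = A \cup B$, take $d(x, y) = 1$ for distinct $x, y$ in the same part, together with
\[
d(a, b) \;=\; \begin{cases} 1 + \dfrac{1}{\min(n, m)} & \text{if } a = a_n,\ b = b_m,\\[6pt] 2 & \text{otherwise,}\end{cases}
\]
for $a \in A$, $b \in B$. All cross-distances lie in $(1, 2]$, so the triangle inequality again reduces to $2 \leq 1 + 1$. Every $\inf_{a \in A} d(x, a)$ and $\inf_{b \in B} d(x, b)$ is attained; the only delicate case is $x = a_n$, where $\inf_{b \in B} d(a_n, b) = 1 + 1/n$ is realised by any $b_m$ with $m \geq n$, the remaining cases being either trivial or giving the constant value $2$. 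Finally $\dist(A, B) = 1$ while $d(a, b) > 1$ for every $(a, b) \in A \times B$, so $G_{X, d}(A, B)$ has no edges and coincides with $G$. The principal obstacle is engineering this last metric so that simultaneously (1) the global infimum $\dist(A, B)$ is not attained, (2) every individual infimum $\inf_{a \in A} d(x, a)$ still is, and (3) the triangle inequality survives; the perturbation $1/\min(n, m)$, bounded in $(0, 1]$ and concentrated on one countable subset of $A \times B$, is tailored exactly for these three requirements.
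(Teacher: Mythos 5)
Your proof is correct and follows essentially the same route as the paper: the same cyclic chain of implications, the same finite-part contradiction for \ref{t2.1:s3}\(\Rightarrow\)\ref{t2.1:s1}, the same three-valued metric for nonempty \(G\), and the same device of a globally non-attained infimum \(1 = \inf_n(1 + 1/n)\) whose pointwise infima are all attained for the case of empty \(G\) with infinite parts. The only noteworthy difference is that the paper's empty-case construction, \(d_1(x, y) = \max\{1 + 1/\Phi(x),\, 1 + 1/\Phi(y)\}\) for a suitable surjection \(\Phi \colon X \to \NN\), is an ultrametric---a property the paper reuses verbatim in the proof of Theorem~\ref{t2.9}---whereas your \(1 + 1/\min(n, m)\) metric is not ultrametric (e.g.\ \(d(a_1, b_2) = 2 > \max\{d(a_1, a_2), d(a_2, b_2)\} = 3/2\)), though this is immaterial for Theorem~\ref{t2.1} itself.
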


\begin{proof}
\(\ref{t2.1:s1} \Rightarrow \ref{t2.1:s2}\). Let statement \ref{t2.1:s1} hold. Write \(X = A \cup B\). Our goal is to construct a metric \(d \colon X \times X \to [0, \infty)\) such that \(G = G_{X, d}(A, B)\).

Suppose first that \(G\) is empty. Then \(A\) and \(B\) are infinite by statement \ref{t2.1:s1}. 

Let us denote by \(\NN\) the set of all strictly positive integer numbers. Since \(A\) and \(B\) are infinite, there is a surjective mapping \(\Phi \colon X \to \NN\) such that
\begin{equation}\label{t2.1:e0}
\Phi(A) = 2\NN = \{2, 4, 6, \ldots\}, \quad \text{and} \quad \Phi(B) = 2\NN - 1 = \{1, 3, 5, \ldots\}.
\end{equation}

Let us define a function \(d_1 \colon X \times X \to [0, \infty)\) by the rule
\begin{equation}\label{t2.1:e1}
d_1(x, y) = \begin{cases}
0 & \text{if } x = y,\\
1 & \text{if \(x \neq y\) but \(\Phi(x) = \Phi(y)\)},\\
\max\left\{1 + \frac{1}{\Phi(x)}, 1 + \frac{1}{\Phi(y)}\right\} & \text{if } \Phi(x) \neq \Phi(y).
\end{cases}
\end{equation}
We claim that \(d_1\) is an ultrametric on \(X\) and the sets \(A\), \(B\) are proximinal sets in \((X, d_1)\). 

Indeed, it follows directly from~\eqref{t2.1:e1} that \(d_1\) is a symmetric mapping and \(d_1(x, y) = 0\) holds if and only if \(x = y\). Thus, \(d_1\) is an ultrametric on \(X\) if and only if we have the strong triangle inequality
\begin{equation}\label{t2.1:e5}
d_1(x, y) \leqslant \max \{d_1(x, z), d_1(z, y)\}
\end{equation}
for all \(x\), \(y\), \(z \in X\). 

Inequality~\eqref{t2.1:e5} evidently holds if \(x = y\). Let \(x\) and \(y\) be distinct points of \(X\). To prove~\eqref{t2.1:e5} suppose first that \(\Phi(x) = \Phi(y)\). Then \eqref{t2.1:e1} implies the equality \(d_1(x, y) = 1\) and the inequality
\[
\max \{d_1(x, z), d_1(z, y)\} \geqslant 1.
\]
(If the last inequality is false, then, by \eqref{t2.1:e1}, \(d_1(x, z) = d_1(z, y) = 0\) and, consequently, we have \(x = y\) that contradicts \(x \neq y\).)

For the case when \(\Phi(x) \neq \Phi(y)\) but \(\Phi(z) = \Phi(x)\) or \(\Phi(z) = \Phi(y)\), inequality~\eqref{t2.1:e5} can be written as
\[
\max\left\{1 + \frac{1}{\Phi(x)}, 1 + \frac{1}{\Phi(y)}\right\} \leqslant \max\left\{1 + \frac{1}{\Phi(x)}, 1 + \frac{1}{\Phi(y)}, 1\right\}
\]
which is obviously true.

To complete the proof of inequality~\eqref{t2.1:e5}, it suffices to note that this inequality is equivalent to the obvious inequality
\[
\max\left\{1 + \frac{1}{\Phi(x)}, 1 + \frac{1}{\Phi(y)}\right\} \leqslant \max\left\{1 + \frac{1}{\Phi(x)}, 1 + \frac{1}{\Phi(y)}, 1 + \frac{1}{\Phi(z)}\right\}
\]
whenever \(\Phi(x)\), \(\Phi(y)\) and \(\Phi(z)\) are pairwise distinct. Thus, \(d_1 \colon X \times X \to [0, \infty)\) is an ultrametric on \(X\).

Let us prove that \(A\) is proximinal. Let \(b_0 \in B\). By \eqref{t2.1:e1}, we obtain
\begin{equation}\label{t2.1:e6}
\dist(b_0, A) = \inf_{x \in A} \left\{\max\left\{1 + \frac{1}{\Phi(x)}, 1 + \frac{1}{\Phi(b_0)}\right\}\right\}.
\end{equation}
Now, using~\eqref{t2.1:e0}, we can find \(a_0 \in A\) such that \(\Phi(a_0) > \Phi(b_0)\). The last inequality and~\eqref{t2.1:e6} imply
\[
d_1(a_0, b_0) = \max\left\{1 + \frac{1}{\Phi(a_0)}, 1 + \frac{1}{\Phi(b_0)}\right\} = 1 + \frac{1}{\Phi(b_0)}
\]
and
\[
\dist(b_0, A) \geqslant \max\left\{1 + \frac{1}{\Phi(a_0)}, 1 + \frac{1}{\Phi(b_0)}\right\} = d_1(a_0, b_0) \geqslant \dist(b_0, A).
\]
Thus, \(A\) is a proximinal subset in \((X, d)\). Similarly, we can prove that \(B\) is also proximinal in \((X, d)\). Moreover, \eqref{t2.1:e1} implies that
\[
\dist(A, B) = 1
\]
and \(d(x, y) > 1\) whenever \(x \in A\), \(y \in B\) or \(y \in A\), \(x \in B\). Thus, \(G = G_{X, d_1}(A, B)\) for \((X, d_1) \in \mathbf{UM}\) by Definition~\ref{d1.5}. 

Let us consider now the case when \(G\) is nonempty and define \(d_2 \colon X \times X \to [0, \infty)\), \(X = A \cup B\), by
\begin{equation}\label{t2.1:e2}
d_2(x, y) = \begin{cases}
0 & \text{if } x = y,\\
1 & \text{if } \{x, y\} \in E(G),\\
2 & \text{otherwise}.
\end{cases}
\end{equation}
It is clear that \(d_2\) is a metric. Since the distance set \(D(X, d_2)\) is finite, for every \(x_0 \in X\) and every nonempty \(Z \subseteq X\), we can find \(z_0 \in Z\) such that
\[
d_2(x_0, z_0) = \inf \{d_2(x_0, z) \colon z \in Z\},
\]
i.e., every nonempty subset of \(X\) is proximinal in \((X, d_2)\). Thus, \((A, B)\) is a proximinal pair in \((X, d_2)\). Since \(E(G) \neq \varnothing\) holds, from~\eqref{t2.1:e2} follows that
\[
\dist(A, B) = 1.
\]
The last equality, \eqref{t2.1:e2}, Definition~\ref{d1.2} and Definition~\ref{d1.5} imply now that \(G = G_{X, d_2}(A, B)\) holds.

\(\ref{t2.1:s2} \Rightarrow \ref{t2.1:s3}\). This implication is valid because every metric is a semimetric.

\(\ref{t2.1:s3} \Rightarrow \ref{t2.1:s1}\). Let \(A\) and \(B\) be disjoint proximinal sets in a semimetric space \((X, d)\) and let \(G = G_{X, d}(A, B)\). We must show that \ref{t2.1:s1} is valid. 

Suppose, on the contrary, that \(G\) is empty, \(E(G) = \varnothing\), but at least one from the sets \(A\) and \(B\) is finite. For definiteness, we can assume \(|A|\leqslant |B|\) that implies the finiteness of \(A\). Since \(B\) is proximinal, for every \(a \in A\) there is \(b^* = b^*(a) \in B\) such that \(d(a, b^*) = \dist (a, B)\). Now using \eqref{d1.1:e1} and \eqref{e1.2} we have
\[
\dist (A, B) = \inf_{a \in A} \dist (a, B).
\]
Since \(A\) is finite and nonempty, we can find \(a_0 \in A\) such that
\[
\inf_{a \in A} \dist (a, B) = \dist (a_0, B).
\]
Consequently, \(\dist (A, B) = d(a_0, b^*)\) holds with \(b^* = b^*(a)\). By Definition~\ref{d1.5}, the last equality imply \(\{a_0, b^*\} \in E(G)\), contrary to \(E(G) = \varnothing\).
\end{proof}

Theorem~\ref{t2.1} gives us the following corollary.

\begin{corollary}\label{c2.2}
A bipartite graph \(G\) is not isomorphic to any proximinal graph if and only if \(G\) is finite and empty.
\end{corollary}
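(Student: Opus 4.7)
The plan is to reduce Corollary~\ref{c2.2} to Theorem~\ref{t2.1}, keeping in mind that isomorphism in the sense of Definition~\ref{d1.6} does not preserve any chosen bipartition, whereas the hypothesis of Theorem~\ref{t2.1} is stated for fixed parts.

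For the ``if'' direction I would assume $G$ is finite and empty and reason by contradiction. A graph isomorphic to $G$ is again finite and empty, so if some proximinal graph $H = G_{X, d}(A, B)$ were isomorphic to $G$, then $H$ itself would be finite and empty; applying the implication \ref{t2.1:s3} $\Rightarrow$ \ref{t2.1:s1} of Theorem~\ref{t2.1} to $H$ forces the parts $A$ and $B$ to be infinite, contradicting the finiteness of $V(H) = A \cup B$.

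For the ``only if'' direction I would argue the contrapositive: given that $G$ is not both finite and empty, I exhibit a proximinal graph isomorphic to $G$. If $G$ is nonempty, its given bipartition $(A, B)$ already places $G$ under condition~\ref{t2.1:s1}, so \ref{t2.1:s1} $\Rightarrow$ \ref{t2.1:s2} shows that $G(A, B)$ is itself proximinal, and $G$ is isomorphic to itself. If instead $G$ is empty with $V(G)$ infinite, then the bipartition supplied by Definition~\ref{d1.4} may fail to split $V(G)$ into two infinite halves (consider e.g.\ $V(G) = \{v\} \cup \{u_1, u_2, \ldots\}$ with parts $\{v\}$ and $\{u_1, u_2, \ldots\}$), so I would first repartition $V(G)$ into two disjoint infinite nonvoid subsets $A'$, $B'$, which is possible for any infinite set. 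The resulting empty bipartite graph with parts $(A', B')$ coincides with $G$ as an abstract graph and now satisfies condition~\ref{t2.1:s1}, hence is proximinal by Theorem~\ref{t2.1}.

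The only subtlety worth naming is this last repartitioning step: it is needed precisely because Theorem~\ref{t2.1} is sensitive to the choice of bipartition while the notion of ``isomorphic to a proximinal graph'' in Corollary~\ref{c2.2} is not, and it rests on the elementary fact that every infinite set admits a partition into two disjoint infinite nonvoid subsets.
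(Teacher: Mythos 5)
Your proof is correct, and it rests on the same reduction to Theorem~\ref{t2.1} that the paper uses; the difference lies in which implications are actually written out. For the direction ``finite and empty $\Rightarrow$ not isomorphic to any proximinal graph,'' your argument (apply \ref{t2.1:s3}~$\Rightarrow$~\ref{t2.1:s1} to a hypothetical proximinal graph $H$ isomorphic to $G$ and contradict the finiteness of $V(H)=A\cup B$) matches the paper's, except that the paper transports the parts to $G$ through the isomorphism $\Phi$ and applies the theorem to $G$ with parts $\Phi(A)$, $\Phi(B)$ rather than to $H$ itself --- a cosmetic difference. Where you genuinely add something is the converse direction: the paper's ``Conversely'' paragraph in fact establishes only ``isomorphic to a proximinal graph $\Rightarrow$ not finite and empty,'' which is logically the contrapositive of the first direction, so the implication ``not finite and empty $\Rightarrow$ isomorphic to some proximinal graph'' is left implicit. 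You supply it explicitly via \ref{t2.1:s1}~$\Rightarrow$~\ref{t2.1:s2}, and you correctly isolate the one nontrivial point: for an empty $G$ with infinite vertex set, the bipartition guaranteed by Definition~\ref{d1.4} need not have both parts infinite, so one must first repartition $V(G)$ into two disjoint infinite nonvoid parts (legitimate for an empty graph, since no edge constraint can be violated) before Theorem~\ref{t2.1} applies. This is exactly the subtlety the paper glosses over, and your treatment of that direction is the more complete of the two.
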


\begin{proof}
If \(G\) is finite and empty, then \(G\) is not isomorphic to any proximinal graph by Theorem~\ref{t2.1}. 

Conversely, suppose that there is a proximinal graph \(H\) with parts \(A\) and \(B\) such that \(G\) and \(H\) are isomorphic. Let \(\Phi \colon V(H) \to V(G)\) be an isomorphism of \(H\) and \(G\). Then the bipartite graph \(G\) with parts \(\Phi(A)\) and \(\Phi(B)\) is also proximinal by Theorem~\ref{t2.1}. Using this theorem again, we obtain that either \(E(G) \neq \varnothing\) or \(E(G) = \varnothing\) but the parts \(\Phi(A)\) and \(\Phi(B)\) of \(G\) are infinite.
\end{proof}

Analyzing the proof of Theorem~\ref{t2.1}, we also obtain the following.

\begin{corollary}\label{c2.3}
Let \((X, d_1)\) be an arbitrary semimetric space. Then, for every nonempty \(G_{X, d_1}(A, B)\), there is a metric \(d_2 \colon X \times X \to [0, \infty)\) such that \(G_{X, d_1}(A, B) = G_{X, d_2}(A, B)\) and the cardinality of the distance set \(D(X, d_2)\) does not exceed three,
\begin{equation}\label{c2.3:e1}
|D(X, d_2)| \leqslant 3.
\end{equation}
\end{corollary}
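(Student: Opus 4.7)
The plan is to reuse, essentially verbatim, the construction from the nonempty subcase of \(\ref{t2.1:s1} \Rightarrow \ref{t2.1:s2}\) in Theorem~\ref{t2.1}, but now applied to the whole space \(X\) instead of only \(A \cup B\). Given \(G = G_{X, d_1}(A, B)\) with \(E(G) \neq \varnothing\), I would define \(d_2 \colon X \times X \to [0, \infty)\) by
\[
d_2(x, y) = \begin{cases} 0 & \text{if } x = y, \\ 1 & \text{if } \{x, y\} \in E(G), \\ 2 & \text{otherwise}. \end{cases}
\]
The values of \(d_2\) lie in \(\{0, 1, 2\}\), so the bound \eqref{c2.3:e1} is immediate.

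First I would check that \(d_2\) is a metric. Symmetry and the separation axiom are built into the definition. For the triangle inequality, the only potential failure is \(d_2(x, y) = 2\) paired with \(d_2(x, z) + d_2(z, y) < 2\); but then both summands must be \(0\), forcing \(x = z = y\), which contradicts \(d_2(x, y) = 2\).

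Next I would verify that \(A\) and \(B\) are proximinal in \((X, d_2)\). Since \(D(X, d_2)\) is finite, for every \(x_0 \in X\) and every nonempty \(Z \subseteq X\) the infimum \(\inf\{d_2(x_0, z) : z \in Z\}\) is attained. Hence \(A\) and \(B\) are automatically proximinal, by the same observation used in the nonempty case of the proof of Theorem~\ref{t2.1}. It then remains to check that the graphs coincide. Because \(G\) is nonempty, there is at least one edge \(\{a, b\}\) with \(a \in A\), \(b \in B\), giving \(d_2(a, b) = 1\); and for any \(a' \in A\), \(b' \in B\) the hypothesis \(A \cap B = \varnothing\) gives \(a' \neq b'\), hence \(d_2(a', b') \geqslant 1\). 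Thus \(\dist(A, B) = 1\) in \((X, d_2)\), and by Definition~\ref{d1.5} a pair \(\{a', b'\}\) is an edge of \(G_{X, d_2}(A, B)\) iff \(d_2(a', b') = 1\) iff \(\{a', b'\} \in E(G)\).

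There is no real obstacle here: the corollary is little more than an observation that the metric constructed in the nonempty case of Theorem~\ref{t2.1} extends without change from \(A \cup B\) to the ambient space \(X\) while keeping at most three distinct distances. The only minor point worth mentioning — and the only genuine difference from the relevant part of Theorem~\ref{t2.1} — is that vertices of \(X\) lying outside \(A \cup B\), and isolated vertices of \(G\) inside \(A \cup B\), do not interfere with proximinality of \(A\) or \(B\); this is handled uniformly by the finiteness of \(D(X, d_2)\).
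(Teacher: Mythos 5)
Your proposal is correct and is exactly the paper's proof: the paper simply defines \(d_2\) on all of \(X\) by formula~\eqref{t2.1:e2} and notes that the verification from the nonempty case of Theorem~\ref{t2.1} carries over unchanged. (One cosmetic remark: in your triangle-inequality check, \(d_2(x,z)+d_2(z,y)<2\) does not immediately force both summands to vanish --- the sum could a priori be \(1\) --- but if either summand is \(0\) then two of the points coincide and the remaining distance equals \(d_2(x,y)=2\), a contradiction in every case, so the argument is easily repaired.)
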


\begin{proof}
It suffices to define \(d_2(x, y)\) by formula~\eqref{t2.1:e2} for all \(x\), \(y \in X\).
\end{proof}

\begin{remark}\label{r2.4}
This is easy to prove that the constant \(3\) is the best possible in inequality~\eqref{c2.3:e1}.
\end{remark}

In the remainder of this section, we briefly discuss of some relationships between morphisms of semimetric spaces and morphisms of the proximinal graphs generated by these spaces. 

Let \((X, d)\) and \((Y, \rho)\) be isometric semimetric spaces. Recall that \((X, d)\) and \((Y, \rho)\) are \emph{isometric} iff there is a bijection \(F \colon X \to Y\), an \emph{isometry} of \((X, d)\) and \((Y, \rho)\), such that the equality
\[
d(x, y) = \rho(F(x), F(y))
\]
holds for all \(x\), \(y \in X\). It is easy to see that a set \(A \subseteq X\) is proximinal in \((X, d)\) iff \(F(A)\) is proximinal in \((Y, \rho)\). Moreover, for every \(x \in X\) and each \(B \subseteq X\), a point \(b_0 \in B\) is a best approximation to \(x\) in \(B\) iff \(F(b_0)\) is a best approximation to \(F(x)\) in \(F(B)\), and the equality
\[
\inf\bigl\{d(x, y) \colon x \in A \text{ and } y \in B\bigr\} = 
\inf\bigl\{\rho(u, v) \colon u \in F(A) \text{ and } v \in F(B)\bigr\} 
\]
holds for all \(A\), \(B \subseteq X\). Hence, for all disjoint proximinal sets \(A\), \(B \subseteq X\), the sets \(F(A)\) and \(F(B)\) are disjoint and proximinal in \((Y, \rho)\), the proximinal graphs \(G_{X, d}(A, B)\) and \(G_{Y, \rho}(F(A), F(B))\) are isomorphic, and the restriction \(F|_{A \cup B}\) is an isomorphism of these graphs. The following proposition is a partial reversal of the last statement.

\begin{proposition}\label{p2.3}
Let \(H\) be a graph and let \(G = G(A, B)\) be a bipartite graph with fixed parts \(A\) and \(B\). Suppose \(H\) and \(G\) are isomorphic and \(\Phi \colon V(G) \to V(H)\) is an isomorphism of these graphs. Then \(H\) is bipartite with parts \(\Phi(A)\) and \(\Phi(B)\), and, in addition, \(G(A, B)\) is proximinal iff \(H = H(\Phi(A), \Phi(B))\) is proximinal. Moreover, there are a metric \(d\) on the set \(X = A \cup B\) and a metric \(\rho\) on \(Y = \Phi(A) \cup \Phi(B)\) such that 
\[
G = G_{X, d}(A, B) \quad \text{and} \quad H = H_{Y, \rho}(\Phi(A), \Phi(B)),
\]
and \(\Phi \colon X \to Y\) is an isometry of \((X, d)\) and \((Y, \rho)\).
\end{proposition}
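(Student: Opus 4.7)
The plan is to break the proposition into three parts: (a) the bipartite structure of \(H\), (b) the equivalence of proximinality, and (c) the construction of the two metrics making \(\Phi\) an isometry. Parts (a) and (b) are short; the bulk of the work is in (c), where the idea is to build a metric on \(X\) by the recipe from the proof of Theorem~\ref{t2.1} and then push it forward to \(Y\) along \(\Phi\).

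For part~(a), since \(\Phi\) is a bijection, \(\Phi(A)\) and \(\Phi(B)\) are disjoint, nonvoid, and their union is \(V(H)\). Any edge \(\{u,v\}\in E(H)\) equals \(\{\Phi(a),\Phi(b)\}\) with \(\{a,b\}\in E(G)\); as \(G\) is bipartite with parts \(A,B\), one of \(a,b\) lies in \(A\) and the other in \(B\), so one of \(u,v\) lies in \(\Phi(A)\) and the other in \(\Phi(B)\). Hence \(H\) is bipartite with parts \(\Phi(A),\Phi(B)\). For part~(b), observe that \(|E(G)|=|E(H)|\) (edges are preserved by \(\Phi\)) and \(|A|=|\Phi(A)|\), \(|B|=|\Phi(B)|\); therefore \(G\) satisfies the dichotomy in statement \ref{t2.1:s1} of Theorem~\ref{t2.1} iff \(H\) does, which by Theorem~\ref{t2.1} is equivalent to each being proximinal.

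For part~(c), I assume the common proximinality (otherwise, by Corollary~\ref{c2.2}, \(G\) is finite empty and the representation \(G=G_{X,d}(A,B)\) cannot exist). Apply the constructive direction \ref{t2.1:s1}\(\Rightarrow\)\ref{t2.1:s2} from the proof of Theorem~\ref{t2.1} to obtain a metric \(d\) on \(X=A\cup B\) with \(G=G_{X,d}(A,B)\): take \(d\) to be the three-valued metric \(d_{2}\) from \eqref{t2.1:e2} if \(G\) is nonempty, or the ultrametric \(d_{1}\) from \eqref{t2.1:e1} if \(G\) is empty (in which case \(A,B\) are infinite by \ref{t2.1:s1}). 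Next, define \(\rho\colon Y\times Y\to[0,\infty)\) by
\[
\rho(\Phi(x),\Phi(y)):=d(x,y)\quad\text{for all }x,y\in X.
\]
This is well-defined because \(\Phi\) is a bijection, and \(\rho\) inherits symmetry, nondegeneracy, and the triangle inequality directly from \(d\); by construction \(\Phi\) is an isometry of \((X,d)\) onto \((Y,\rho)\).

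It remains to verify \(H=H_{Y,\rho}(\Phi(A),\Phi(B))\). From the isometry discussion that precedes Proposition~\ref{p2.3}, \(\Phi(A)\) and \(\Phi(B)\) are disjoint proximinal subsets of \((Y,\rho)\) and \(\dist(\Phi(A),\Phi(B))=\dist(A,B)\). Thus, for \(u=\Phi(a)\in\Phi(A)\) and \(v=\Phi(b)\in\Phi(B)\), we have the chain of equivalences
\[
\{u,v\}\in E\bigl(H_{Y,\rho}(\Phi(A),\Phi(B))\bigr)
\;\Leftrightarrow\;\rho(u,v)=\dist(\Phi(A),\Phi(B))
\;\Leftrightarrow\;d(a,b)=\dist(A,B)
\;\Leftrightarrow\;\{a,b\}\in E(G),
\]
and the graph isomorphism property of \(\Phi\) gives the last condition is equivalent to \(\{u,v\}\in E(H)\). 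Since \(V(H)=Y=V(H_{Y,\rho}(\Phi(A),\Phi(B)))\) with the same parts, equality of edge sets yields equality of graphs. The main (and really only) potential obstacle is making sure that the construction of \(d\) from the proof of Theorem~\ref{t2.1} can be applied in both the empty and nonempty cases; this is precisely the dichotomy statement \ref{t2.1:s1}, which is inherited by \(G\) from proximinality.
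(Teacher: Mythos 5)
Your proof is correct and follows essentially the same route as the paper: both establish the bipartite structure of \(H\) from the definitions, take the metric \(d\) on \(X = A \cup B\) produced in the proof of Theorem~\ref{t2.1}, and transport it along \(\Phi\) to a metric \(\rho\) on \(Y\) so that \(\Phi\) is an isometry and \(H = H_{Y, \rho}(\Phi(A), \Phi(B))\). The only (harmless) variation is that you justify the equivalence ``\(G\) proximinal iff \(H\) proximinal'' directly from the isomorphism-invariance of statement \ref{t2.1:s1} of Theorem~\ref{t2.1}, whereas the paper obtains it as a by-product of the isometry construction applied to \(\Phi\) and then to \(\Phi^{-1}\).
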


\begin{proof}
It follows directly from Definitions~\ref{d1.4} and~\ref{d1.6} that \(H\) is bipartite with parts \(\Phi(A)\) and \(\Phi(B)\).

Let \(G\) be proximinal. It was shown in the proof of Theorem~\ref{t2.1} that there is a metric \(d \colon X \times X \to [0, \infty)\) such that \(G = G_{X, d}(A, B)\) and \(X = A \cup B\). Let us denote by \(Y\) the vertex set of \(H\), and define \(\rho \colon Y \times Y \to [0, \infty)\) such that
\[
d(x, y) = \rho(\Phi(x), \Phi(y))
\]
for all \(x\), \(y \in X\). Then \(\rho\) is a metric on \(Y\) and the mapping \(\Phi \colon X \to Y\) is an isometry of the metric spaces \((X, d)\) and \((Y, \rho)\). Thus, we have the equality
\[
H = H_{Y, \rho}(\Phi(A), \Phi(B)).
\]

Analogously, if the graph \(H = H(\Phi(A), \Phi(B))\) is proximinal, then arguing as above and using the inverse isomorphism \(\Phi^{-1} \colon V(H) \to V(G)\) instead of \(\Phi \colon V(G) \to V(H)\), we can show that \(G = G(A, B)\) is also proximinal. 
\end{proof}

\begin{example}\label{ex2.3}
Let \((X, d)\) be a metric space, \(A\) be a non-closed subset of \(X\) and let \(b \notin A\) be a limit point of \(A\). Let us consider the bipartite graph \(G = G(A, B)\) with parts \(A\) and \(B = \{b\}\), and the edges set \(E(G)\) such that
\[
\bigl(\{a, b\} \in E(G)\bigr) \Leftrightarrow \bigl(d(a, b) = \dist(A, B)\bigr)
\]
whenever \(a \in A\) and \(b \in B\). Then \(G\) is an empty graph, \(A\) is infinite and \(B\) is finite. By Theorem~\ref{t2.1}, \(G\) is not a proximinal graph for any semimetric space, but nevertheless, using this theorem and Corollary~\ref{c2.2}, we can find a metric space \((Y, \rho)\) and a proximinal graph \(H = H_{Y, \rho}(A, B)\) such that \(H\) and \(G\) are isomorphic.
\end{example}

Let \((X, d)\) be a semimetric space and let \(Z \subseteq X\). A mapping \(\Phi \colon Z \to Z\) is said to be \emph{nonexpansive} if the inequality 
\[
d(\Phi(x), \Phi(y)) \leqslant d(x, y)
\]
holds for all \(x\), \(y \in Z\). A mapping \(F \colon A \cup B \to A \cup B\), defined on the union of two nonempty sets \(A\) and \(B\), is said to be \emph{cyclic} if we have
\[
F(a) \in B \quad \text{and} \quad F(b) \in A
\]
for all \(a \in A\) and \(b \in B\).

Let \(G\) and \(H\) be graphs. Following~\cite{HN2004} we say that a mapping \(\Psi \colon V(G) \to V(H)\) is a \emph{homomorphism} of \(G\) and \(H\) if \(\{\Psi(u), \Psi(v)\} \in E(H)\) whenever \(\{u, v\} \in E(G)\). If \(G = H\), then a homomorphism \(\Psi \colon V(G) \to V(H)\) is said to be self-homomorphism. The next proposition was motivated by paper~\cite{EKV2005SM}.

\begin{proposition}\label{p2.7}
Let \(A\) and \(B\) be disjoint proximinal sets in a semimetric space \((X, d)\) and let \(F \colon A \cup B \to A \cup B\) be cyclic and nonexpansive. Then \(F\) is a self homomorphism of the proximinal graph \(G = G_{X, d}(A, B)\).
\end{proposition}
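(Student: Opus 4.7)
The plan is to unwind the definitions and verify directly that the edge relation is preserved by $F$. Recall from Definition~\ref{d1.5} that a pair $\{u,v\}$ with $u \in A$, $v \in B$ is an edge of $G = G_{X,d}(A, B)$ exactly when $d(u,v) = \dist(A,B)$. So a homomorphism condition here requires us to show: if $a \in A$, $b \in B$ and $d(a,b) = \dist(A,B)$, then $\{F(a), F(b)\}$ is also an edge of $G$.

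First I would use the cyclicity of $F$ to locate the images: since $a \in A$ and $b \in B$, we have $F(a) \in B$ and $F(b) \in A$, so the unordered pair $\{F(a), F(b)\}$ has one endpoint in each part (the vertices are distinct because $A \cap B = \varnothing$). Next I would sandwich $d(F(a), F(b))$ between $\dist(A,B)$ and itself. The lower bound $d(F(a), F(b)) \geqslant \dist(A,B)$ comes for free from~\eqref{e1.2} applied to $F(b) \in A$ and $F(a) \in B$. The upper bound comes from the nonexpansiveness of $F$ together with the hypothesis that $\{a,b\}$ is an edge:
\[
d(F(a), F(b)) \leqslant d(a, b) = \dist(A, B).
\]
Combining both bounds gives $d(F(a), F(b)) = \dist(A, B)$, so by Definition~\ref{d1.5} we conclude $\{F(a), F(b)\} \in E(G)$, which is exactly the self-homomorphism property.

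There is essentially no obstacle here; the argument is a short two-line inequality chain, and the only subtlety worth flagging is that we need $F(a) \neq F(b)$ to have a bona fide edge (as opposed to a loop), which is automatic from cyclicity and $A \cap B = \varnothing$. I would also remark that the proximinality hypothesis on $A$ and $B$ is not used in this direction — it is only needed to ensure that $G = G_{X,d}(A, B)$ is defined as a proximinal graph in the first place.
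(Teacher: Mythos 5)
Your proposal is correct and follows essentially the same argument as the paper: use cyclicity to place $F(a)$ and $F(b)$ in opposite parts, then sandwich $d(F(a),F(b))$ between $\dist(A,B)$ (from the definition of $\dist$) and $d(a,b)=\dist(A,B)$ (from nonexpansiveness). Your side remarks --- that $F(a)\neq F(b)$ follows from $A\cap B=\varnothing$ and that proximinality is only needed for $G$ to be defined --- are accurate and consistent with the paper's treatment.
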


\begin{proof}
Let \(\{a, b\}\) be an edge of \(G\). We must show that \(\{F(a), F(b)\} \in E(G)\). Suppose that 
\begin{equation}\label{p2.7:e1}
a \in A \quad \text{and} \quad b \in B
\end{equation}
(the case when \(a \in B\) and \(b \in A\) is similar). From~\eqref{p2.7:e1} it follows that
\begin{equation}\label{p2.7:e2}
F(a) \in B \quad \text{and} \quad F(b) \in A,
\end{equation}
because \(F\) is cyclic. In addition, we have
\begin{equation}\label{p2.7:e3}
d(a, b) \geqslant d(F(a), F(b)),
\end{equation}
because \(F\) is nonexpansive. Now \eqref{p2.7:e1}, \eqref{p2.7:e3} and Definition~\ref{d1.5} imply
\[
\dist(A, B) = d(a, b) \geqslant d(F(a), F(b)) \geqslant \dist(A, B).
\]
Hence, the equality
\[
\dist(A, B) = d(F(a), F(b))
\]
holds whenever \(\{a, b\} \in E(G)\). Using \eqref{p2.7:e3} and Definition~\ref{d1.5} again, we obtain the relationship
\[
\{F(a), F(b)\} \in E(G)
\]
for every \(\{a, b\} \in E(G)\). Thus, \(F\) is a self homomorphism of \(G\).
\end{proof}

\begin{corollary}\label{c2.7}
Let \(A\), \(B\) and \(F\) satisfy the conditions of Proposition~\ref{p2.7}. If \((a_0, b_0)\) is a best proximity pair for \(A\) and \(B\), \(\{a_0, b_0\} \in E(G_{X, d}(A, B))\), then we have the equality
\[
d(a_0, b_0) = d(F^n(a_0), F^n(b_0))
\]
for every \(n \in \NN\), where 
\[
F^1(a_0) = F(a_0), \quad F^1(b_0) = F(b_0) \quad \text{for} \quad n = 1
\]
and
\[
F^n(a_0) = F(F^{n-1}(a_0)), \quad F^n(b_0) = F(F^{n-1}(b_0)) \quad \text{for} \quad n \geqslant 2.
\]
\end{corollary}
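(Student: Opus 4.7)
The plan is to prove this by induction on $n$, using Proposition~\ref{p2.7} as the essential engine. The key observation is that by Definition~\ref{d1.5}, every edge $\{a,b\} \in E(G_{X,d}(A,B))$ satisfies $d(a,b) = \dist(A,B)$; so it suffices to show that $\{F^n(a_0), F^n(b_0)\}$ is an edge of $G_{X,d}(A,B)$ for every $n \in \NN$.

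For the base case $n=1$, Proposition~\ref{p2.7} states that $F$ is a self-homomorphism of $G_{X,d}(A,B)$, so from $\{a_0, b_0\} \in E(G_{X,d}(A,B))$ one directly obtains $\{F(a_0), F(b_0)\} \in E(G_{X,d}(A,B))$. For the inductive step, assume $\{F^{n-1}(a_0), F^{n-1}(b_0)\} \in E(G_{X,d}(A,B))$; a second application of Proposition~\ref{p2.7} to this edge yields
\[
\{F^n(a_0), F^n(b_0)\} = \bigl\{F(F^{n-1}(a_0)), F(F^{n-1}(b_0))\bigr\} \in E(G_{X,d}(A,B)).
\]
Invoking Definition~\ref{d1.5} for the edge $\{F^n(a_0), F^n(b_0)\}$ and for the original edge $\{a_0, b_0\}$ then gives
\[
d(F^n(a_0), F^n(b_0)) = \dist(A, B) = d(a_0, b_0),
\]
which is the desired conclusion.

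I do not anticipate any genuine obstacle: Proposition~\ref{p2.7} does all the real work by packaging the cyclic and nonexpansive hypotheses into the single statement that $F$ preserves edges of $G_{X,d}(A,B)$, and the constancy of edge-lengths is built into Definition~\ref{d1.5}. The only mild point of care is to note that cyclicity guarantees $F^n(a_0)$ and $F^n(b_0)$ always lie in opposite parts of the bipartition (so that the phrase ``edge of $G_{X,d}(A,B)$'' is meaningful at each stage), but this is immediate from the alternating pattern $F(A) \subseteq B$, $F(B) \subseteq A$.
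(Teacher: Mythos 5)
Your proof is correct and follows exactly the route the paper intends: induction on \(n\), using Proposition~\ref{p2.7} to propagate edge membership and Definition~\ref{d1.5} to convert edges into the equality \(d(\cdot,\cdot)=\dist(A,B)\). The paper's own proof is a one-line appeal to the same argument, so your write-up is simply a more detailed version of it.
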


\begin{proof}
It follows from Proposition~\ref{p2.7} and the definitions of proximinal graphs and graph homomorphisms by induction on \(n\).
\end{proof}

\section{Proximinal graphs for ultrametric spaces}

In the present section we investigate the structure of the proximinal graphs for ultrametric spaces. The next result is a part of Theorem~2.6 from~\cite{CDL2021pNUAA}.

\begin{theorem}\label{t2.3}
Let \((A, B)\) be a proximinal pair in \((X, d) \in \mathbf{UM}\). Then the following statements are equivalent:
\begin{enumerate}
\item\label{t2.3:s1} The inequality \(\diam (B)\leq \dist(A, B)\) holds.
\item\label{t2.3:s2} The sets \(A_0 \subseteq A\) and \(B_0 \subseteq B\) are proximinal subsets of \(X\), and the equality \(B_0 = B\) holds, and every \((a, b) \in A_0 \times B_0\) is a best proximity pair for the sets \(A\) and \(B\).     
\end{enumerate}
\end{theorem}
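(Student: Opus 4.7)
The plan is to dispatch the easy implication \ref{t2.3:s2}$\Rightarrow$\ref{t2.3:s1} first, then concentrate on the substantive direction. For the easy direction, Remark~\ref{r1.4} forces $A_0\neq\varnothing$ from $B_0=B\neq\varnothing$; picking any $a\in A_0$, the hypothesis gives $d(a,b)=\dist(A,B)$ for every $b\in B$, and then for any $b,b'\in B$ the strong triangle inequality yields
\[
d(b,b')\leqslant\max\{d(b,a),d(a,b')\}=\dist(A,B),
\]
hence $\diam(B)\leqslant\dist(A,B)$.

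For \ref{t2.3:s1}$\Rightarrow$\ref{t2.3:s2}, the key observation will be a rigidity consequence of the ultrametric inequality under $\diam(B)\leqslant\dist(A,B)$: for every fixed $a\in A$ the function $b\mapsto d(a,b)$ is constant on $B$. Indeed, for any $b,b'\in B$ both $d(a,b)$ and $d(a,b')$ dominate $\dist(A,B)\geqslant d(b,b')$, and the strong triangle inequality applied to the triple $(a,b,b')$ in both directions forces $d(a,b)=d(a,b')$. Denoting this common value by $f(a)$, I obtain $\dist(c,A)=\inf_{a\in A}f(a)=\dist(A,B)$ for every $c\in B$, and the proximinality of $A$ produces some $a\in A$ with $d(a,c)=\dist(A,B)$; hence $c\in B_0$ and $B_0=B$. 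For any $a\in A_0$ (nonempty by Remark~\ref{r1.4}), the definition of $A_0$ forces $f(a)=\dist(A,B)$, so $d(a,b)=\dist(A,B)$ for every $b\in B$, and thus every $(a,b)\in A_0\times B_0$ is a best proximity pair. In particular, $B_0=B$ is proximinal by hypothesis.

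The main obstacle I expect is proximinality of $A_0$. Fixing any $b_0\in B$ and writing $r=\dist(A,B)$, I observe that $A_0=A\cap\overline{B}_r(b_0)$ since $d(a,b_0)\geqslant r$ for every $a\in A$ with equality precisely on $A_0$. Given $x\in X$, let $a^*\in A$ realize $\dist(x,A)$; if $a^*\in A_0$ I am done. Otherwise $d(a^*,b_0)>r$, and two applications of the strong triangle inequality to the triple $(a^*,b_0,a)$ with $a\in A_0$ yield $d(a^*,a)=d(a^*,b_0)$ for every such $a$. A case split on whether $d(x,a^*)\geqslant d(a^*,b_0)$ or $d(x,a^*)<d(a^*,b_0)$ then shows, via the strong triangle inequality once more, that $d(x,a)$ is independent of $a\in A_0$ with common value $\max\{d(x,a^*),d(a^*,b_0)\}$. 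Hence any $a\in A_0$ realizes $\dist(x,A_0)$, completing the proof.
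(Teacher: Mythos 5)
Your argument is correct, but there is nothing in the paper to compare it against: Theorem~\ref{t2.3} is not proved here, it is quoted verbatim as part of Theorem~2.6 of \cite{CDL2021pNUAA}, so your write-up supplies a self-contained proof where the paper only imports one. Both directions check out. For \ref{t2.3:s1}\(\Rightarrow\)\ref{t2.3:s2}, the ultrametric ``isosceles'' rigidity (constancy of \(b\mapsto d(a,b)\) on \(B\) when \(\diam(B)\leqslant\dist(A,B)\)) and the identification \(A_0=A\cap\ol{B}_r(b_0)\) are exactly the right tools, and the latter is very much in the spirit of how the paper itself handles ultrametric balls later (Lemma~\ref{l4.3}, Corollary~\ref{c4.4}). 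The one place where your wording overstates what the strong triangle inequality delivers is the boundary sub-case \(d(x,a^*)=d(a^*,b_0)\) of the proximinality argument for \(A_0\): there the ultrametric inequality applied to \((x,a^*,a)\) gives only the upper bound \(d(x,a)\leqslant d(a^*,b_0)\) for \(a\in A_0\), and the matching lower bound comes from the minimality of \(a^*\), namely \(d(x,a)\geqslant\dist(x,A)=d(x,a^*)\), not from the strong triangle inequality. Since your case split groups this sub-case into the branch \(d(x,a^*)\geqslant d(a^*,b_0)\), where that minimality is precisely what closes the argument (upper bound by the ultrametric inequality, lower bound by the choice of \(a^*\)), the structure is sound; just make the appeal to \(\dist(x,A)=d(x,a^*)\) explicit there. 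With that sentence added, the proof is complete.
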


For the case \(A \cap B = \varnothing\), Theorem~\ref{t2.3} implies the following.

\begin{proposition}\label{p2.5}
Let \(G = G_{X, d}(A, B)\) be a proximinal for \((X, d) \in \mathbf{UM}\). Then the following statements are equivalent:
\begin{enumerate}
\item\label{p2.5:s1} The inequality \(\diam (B) \leqslant \dist(A, B)\) holds.
\item\label{p2.5:s2} \(G\) is nonempty and \(G'\) is a complete bipartite graph such that \(B \subseteq V(G')\).
\end{enumerate}
\end{proposition}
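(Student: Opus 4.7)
The plan is to derive the implication \ref{p2.5:s1} $\Rightarrow$ \ref{p2.5:s2} as an essentially direct translation of Theorem~\ref{t2.3}, using Remark~\ref{r1.7} to pass between the language of best proximity pairs and the language of edges of $G'$; and to handle \ref{p2.5:s2} $\Rightarrow$ \ref{p2.5:s1} by a one-line application of the strong triangle inequality.

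For \ref{p2.5:s1} $\Rightarrow$ \ref{p2.5:s2} I apply Theorem~\ref{t2.3} to the proximinal pair $(A,B)$ in $(X,d) \in \mathbf{UM}$. This gives $B_0 = B$ and that every $(a,b) \in A_0 \times B_0$ is a best proximity pair for $A$ and $B$. In particular $B_0 \ne \varnothing$, so by Remark~\ref{r1.4} also $A_0 \ne \varnothing$, hence $G$ has at least one edge and is nonempty. Remark~\ref{r1.7} identifies $V(G') = A_0 \cup B_0 = A_0 \cup B$, so $B \subseteq V(G')$; moreover, since $A \cap B = \varnothing$ and $G$ is bipartite with parts $A,B$, the parts of $G'$ are exactly $A_0$ and $B_0$. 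The second assertion of Remark~\ref{r1.7} now turns the statement ``every pair in $A_0 \times B_0$ is a best proximity pair'' into ``every $a \in A_0$ is adjacent in $G$ to every $b \in B_0$'', which is precisely the completeness of $G'$ as a bipartite graph.

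For \ref{p2.5:s2} $\Rightarrow$ \ref{p2.5:s1} I argue directly. Since $G$ is nonempty, Remark~\ref{r1.7} gives $V(G') = A_0 \cup B_0$; combining this with the hypothesis $B \subseteq V(G')$, the inclusion $B_0 \subseteq B$ and the disjointness $A \cap B = \varnothing$, I obtain $B_0 = B$. I then pick any $a_0 \in A_0$ (nonempty since $G'$ has at least one edge). Completeness of $G'$ together with Definition~\ref{d1.5} gives $d(a_0,b) = \dist(A,B)$ for every $b \in B$, and the strong triangle inequality yields
\[
d(b_1,b_2) \leqslant \max\{d(a_0,b_1),\, d(a_0,b_2)\} = \dist(A,B)
\]
for all $b_1,b_2 \in B$, whence $\diam(B) \leqslant \dist(A,B)$.

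There is no real obstacle: one direction is a quotation of Theorem~\ref{t2.3} filtered through Remark~\ref{r1.7}, and the other reduces to a single use of the ultrametric inequality. The only care needed is to check that the two parts of the induced subgraph $G'$ are indeed $A_0$ and $B_0$, which follows at once from $V(G') = A_0 \cup B_0$ and the bipartition $V(G) = A \sqcup B$.
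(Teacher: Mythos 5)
Your proposal is correct and follows essentially the same route as the paper: the forward direction is Theorem~\ref{t2.3} translated through Remarks~\ref{r1.4} and~\ref{r1.7}, and the reverse direction picks a point \(z \in A_0\) and applies the strong triangle inequality to \(d(b_1, b_2) \leqslant \max\{d(z, b_1), d(z, b_2)\}\), exactly as in the paper's proof. Your write-up is in fact slightly more explicit than the paper's in spelling out why \(B_0 = B\) follows from \(B \subseteq V(G')\) in the reverse direction.
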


\begin{proof}
\(\ref{p2.5:s1} \Rightarrow \ref{p2.5:s2}\). Let \ref{p2.5:s1} hold. Then, by statement~\ref{t2.3:s2} of Theorem~\ref{t2.3}, the sets \(A_0\) and \(B_0\) are proximinal and, consequently, nonempty (see Remark~\ref{r1.7}). Using Remark~\ref{r1.4} and statement~\ref{t2.3:s2} of Theorem~\ref{t2.3}, we see that \(G\) is a nonempty graph. Hence, \(G'\) is correctly defined. We must show that \(G'\) is a complete bipartite graph and \(B \subseteq V(G')\) holds. 

Since \(G\) is a bipartite graph with parts \(A\) and \(B\), the inclusions \(A_0 \subseteq A\), \(B_0 \subseteq B\), and the equality
\begin{equation}\label{p2.5:e1}
A_0 \cup B_0 = V(G')
\end{equation}
(see Remark~\ref{r1.7}) imply that \(G'\) is a bipartite graph with parts \(A_0\) and \(B_0\). 
From statement~\ref{p2.5:s1} of the present proposition it follows statement~\ref{t2.3:s1} of Theorem~\ref{t2.3}. Consequently, we have \(\{a, b\} \in V(G')\) for all \(a \in A_0\) and \(b \in B_0\). Thus, \(G'\) is a complete bipartite graph. The inclusion \(B \subseteq V(G')\) holds because \(B_0 \subseteq V(G')\) by~\eqref{p2.5:e1} and we have \(B_0 = B\) by statement~\ref{t2.3:s2} of Theorem~\ref{t2.3}.

\(\ref{p2.5:s2} \Rightarrow \ref{p2.5:s1}\). Let \ref{p2.5:s2} hold. Let us consider an arbitrary \(z \in A_0\). As was noted above, \(G'\) is bipartite with parts \(A_0\) and \(B_0\). Now, using the condition \((X, d) \in \mathbf{UM}\), and statement~\ref{p2.5:s2}, and Definition~\ref{d1.5}, we can prove that
\begin{align*}
\diam(B) &= \sup_{x, y \in B} d(x, y) \leqslant \sup_{x, y \in B} \max \{d(x, z), d(z, y)\} \leqslant \dist(A, B). \qedhere
\end{align*}
\end{proof}

\begin{corollary}\label{c2.10}
Let \(G = G_{Y, \rho} (C, D)\) be a proximinal graph for \((Y, \rho) \in \mathbf{UM}\). Then the following statements are equivalent:
\begin{enumerate}
\item \label{c2.10:s1} \(G\) is connected.
\item \label{c2.10:s2} The inequality 
\begin{equation}\label{c2.10:e1}
\diam (C \cup D) \leqslant \dist (C, D)
\end{equation}
holds.
\item \label{c2.10:s3} \(G\) is a complete bipartite graph.
\end{enumerate}
\end{corollary}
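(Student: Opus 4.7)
My plan is to prove the cycle \ref{c2.10:s3} $\Rightarrow$ \ref{c2.10:s1} $\Rightarrow$ \ref{c2.10:s2} $\Rightarrow$ \ref{c2.10:s3}, using Proposition~\ref{p2.5} for the last (and main) implication and the strong triangle inequality for the middle one. The trivial piece, \ref{c2.10:s3} $\Rightarrow$ \ref{c2.10:s1}, just notes that a complete bipartite graph with nonempty parts is connected (every two vertices are joined by a path of length at most two).

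For \ref{c2.10:s1} $\Rightarrow$ \ref{c2.10:s2}, I would argue as follows. Pick arbitrary $u, v \in V(G) = C \cup D$ and choose a path $u = x_0, x_1, \ldots, x_k = v$ in $G$ joining them, which exists by connectedness. Every $\{x_i, x_{i+1}\}$ is an edge of $G = G_{Y,\rho}(C,D)$, so by Definition~\ref{d1.5} we have $\rho(x_i, x_{i+1}) = \dist(C,D)$. Since $(Y, \rho) \in \mathbf{UM}$, iterating the strong triangle inequality along the path yields
\[
\rho(u, v) \leqslant \max_{0 \leqslant i < k} \rho(x_i, x_{i+1}) = \dist(C, D).
\]
Taking the supremum over $u, v$ gives $\diam(C \cup D) \leqslant \dist(C, D)$.

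For \ref{c2.10:s2} $\Rightarrow$ \ref{c2.10:s3}, the key observation is that bipartite proximinal graphs are symmetric in their parts, so Proposition~\ref{p2.5} can be applied both to $G_{Y,\rho}(C,D)$ and to $G_{Y,\rho}(D,C)$. From \eqref{c2.10:e1} we get both $\diam(D) \leqslant \dist(C,D)$ and $\diam(C) \leqslant \dist(D,C)$. The first inequality together with Proposition~\ref{p2.5} applied to the parts $(C,D)$ gives that $G$ is nonempty, $G'$ is complete bipartite, and $D \subseteq V(G')$. The second inequality together with Proposition~\ref{p2.5} applied with the parts swapped gives $C \subseteq V(G')$. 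Consequently $V(G') = C \cup D = V(G)$, so $G$ has no isolated vertices and $G = G'$ is a complete bipartite graph.

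The only real subtlety to watch out for is the symmetry step in \ref{c2.10:s2} $\Rightarrow$ \ref{c2.10:s3}: I need to be explicit that Proposition~\ref{p2.5} remains applicable when the roles of the parts are interchanged, since the conclusion of that proposition is asymmetric (it singles out one part $B$ as being contained in $V(G')$). Once that is stated cleanly, everything else is routine.
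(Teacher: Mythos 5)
Your proposal is correct and follows essentially the same route as the paper: the same cycle of implications, with an identical path-plus-strong-triangle-inequality argument for \ref{c2.10:s1} \(\Rightarrow\) \ref{c2.10:s2} and the same triviality for \ref{c2.10:s3} \(\Rightarrow\) \ref{c2.10:s1}. The only difference is in \ref{c2.10:s2} \(\Rightarrow\) \ref{c2.10:s3}, where the paper applies Theorem~\ref{t2.3} directly and symmetrically in the two parts to conclude \(C_0 = C\), \(D_0 = D\) and that every pair in \(C \times D\) is a best proximity pair, whereas you route the same symmetric double application through Proposition~\ref{p2.5}; both are sound, and the asymmetry issue you flag is precisely what the symmetric application resolves.
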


\begin{proof}
\(\ref{c2.10:s1} \Rightarrow \ref{c2.10:s2}\). Let \(G\) be connected. Then, for any two distinct \(x\), \(y \in V(G)\), there is a path \(P \subseteq G\) such that 
\begin{equation*}
V(P) = \{x_0, x_1, \ldots, x_k\}, \quad E(P) = \bigl\{\{x_0, x_1\}, \ldots, \{x_{k-1}, x_k\}\bigr\},
\end{equation*}
where \(k \geqslant 1\) and \(x_0 = x\), \(x_k = y\). Since \(E(P) \subseteq E(G)\) holds and \(G\) is bipartite with parts \(C\) and \(D\), Definition~\ref{d1.5} implies the equality
\begin{equation}\label{c2.10:e2}
\rho(x_i, x_{i+1}) = \dist(C, D)
\end{equation}
for every \(i \in \{0, \ldots, k-1\}\). Now using~\eqref{c2.10:e2} and the strong triangle inequality, we obtain
\begin{equation}\label{c2.10:e3}
\rho(x, y) = \rho(x_0, x_k) \leqslant \sup_{0 \leqslant i \leqslant k-1} \rho(x_i, x_{i+1}) = \dist(C, D)
\end{equation}
by induction on \(k\). The equality 
\[
\diam (C \cup D) = \sup_{x, y \in C \cup D} \rho(x, y)
\]
and \eqref{c2.10:e3} imply \eqref{c2.10:e1}.

\(\ref{c2.10:s2} \Rightarrow \ref{c2.10:s3}\). Let \ref{c2.10:s2} hold. Then we have
\[
\diam (C) \leqslant \dist(C, D) \quad \text{and} \quad \diam (D) \leqslant \dist(C, D).
\]
These inequalities and Theorem~\ref{t2.3} imply that \(C_0 = C\) and \(D_0 = D\), and that every \((x, y) \in C \times D\) is a best proximity pair for \((C, D)\). Consequently, \(G\) is a complete bipartite graph by Definition~\ref{d1.4}.

\(\ref{c2.10:s3} \Rightarrow \ref{c2.10:s1}\). This is trivially valid.
\end{proof}

\begin{figure}[ht]
\begin{tikzpicture}[scale=1,
arrow/.style = {-{Stealth[length=5pt]}, shorten >=2pt}]
\def\xx{1.8cm}
\def\yy{1cm}
\coordinate (A1) at (0*\xx, \yy);
\coordinate (A2) at (1*\xx, \yy);
\coordinate (A3) at (2*\xx, \yy);

\coordinate (B1) at (0*\xx, -\yy);
\coordinate (B2) at (1*\xx, -\yy);
\coordinate (B3) at (2*\xx, -\yy);

\draw [fill, black] (A1) circle (2pt);
\draw [fill, black] (A2) circle (2pt);
\draw [fill, black] (A3) circle (2pt);
\draw [fill, black] (B1) circle (2pt);
\draw [fill, black] (B2) circle (2pt);
\draw [fill, black] (B3) circle (2pt);

\draw (A1) -- (B3) -- (A2) -- (B1) -- (A1) -- (B2) -- (A3) -- (B3);
\draw (A2) -- (B2);
\draw (A3) -- (B1);
\draw (2*\xx+1cm, \yy) node {\(K_{3,3}\)};
\end{tikzpicture}
\caption{}\label{fig2}
\end{figure}

\begin{example}\label{ex3.4}
The proximinal graph \(G_{X, d}(A, B)\), which was constructed in Example~\ref{ex1.10}, is a regular bipartite graph of degree \(3\) and has \(8\) vertices. The graph \(K_{3,3}\) (see Figure~\ref{fig2}) is the unique, up to isomorphism, regular complete bipartite graph of degree \(3\). Since \(|V(K_{3,3})| = 6\), the graphs \(G_{X, d}(A, B)\) and \(K_{3,3}\) are not isomorphic. Hence, by Corollary~\ref{c2.10}, \(G_{X, d}(A, B)\) is not isomorphic to any proximinal graph \(G_{Y, \rho}(A, B)\) for \((Y, \rho) \in \mathbf{UM}\).
\end{example}

\begin{example}\label{ex3.6}
Let \((Y, \rho)\) be the ultrametric space endowed with the trivial metric 
\[
\rho(x, y) = \begin{cases}
0 & \text{if } x = y,\\
1 & \text{otherwise},
\end{cases}
\]
and let \(K_{n, m}\) be a complete bipartite graph. If \(C\) and \(D\) are disjoint subsets of \(Y\) such that \(|C| = n\) and \(|D| = m\), then \(C\) and \(D\) are proximinal in \((Y, \rho)\), and \(K_{n, m}\) is isomorphic to the proximinal graph \(G_{Y, \rho}(C, D)\).
\end{example}

To characterize the proximinal graphs for general ultrametric spaces, we will use some properties of closed ultrametric balls.

\begin{lemma}\label{l4.3}
Let \((X, d)\) be an ultrametric space. Then, for every \(\ol{B}_r(c) \in \bB_{X}\) and every \(a \in \ol{B}_r(c)\), we have \(\ol{B}_r(c) = \ol{B}_r(a)\).
\end{lemma}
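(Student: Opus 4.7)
The plan is to prove the set equality $\overline{B}_r(c) = \overline{B}_r(a)$ by establishing the two inclusions separately, each a one-line consequence of the strong triangle inequality.

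First I would fix the hypothesis $a \in \overline{B}_r(c)$, which by definition of a closed ball gives $d(c,a) \leq r$, and hence by symmetry $d(a,c) \leq r$ as well. This symmetric reformulation is the only real preparatory step, since it lets $a$ and $c$ play interchangeable roles in the inequalities to follow.

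For the inclusion $\overline{B}_r(c) \subseteq \overline{B}_r(a)$, I would take an arbitrary $x \in \overline{B}_r(c)$, so $d(c,x) \leq r$, and apply the strong triangle inequality to the triple $a, c, x$ to get
\[
d(a,x) \leqslant \max\{d(a,c), d(c,x)\} \leqslant \max\{r, r\} = r,
\]
which yields $x \in \overline{B}_r(a)$. The reverse inclusion $\overline{B}_r(a) \subseteq \overline{B}_r(c)$ is entirely symmetric: starting from $x \in \overline{B}_r(a)$, the estimate $d(c,x) \leqslant \max\{d(c,a), d(a,x)\} \leqslant r$ finishes the argument.

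There is essentially no obstacle here; the only thing to be careful about is recording both $d(c,a) \leq r$ and $d(a,c) \leq r$ explicitly (via the symmetry axiom of a semimetric) so that both applications of the strong triangle inequality are cleanly justified. No further machinery is needed beyond the definition of a closed ball and the strong triangle inequality recalled in the preliminaries.
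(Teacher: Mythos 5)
Your proof is correct: both inclusions follow immediately from the strong triangle inequality applied to the triples \(a, c, x\) and \(c, a, x\), exactly as you write. The paper itself gives no proof of this lemma, referring instead to Proposition~18.4 of the cited book of Schikhof, and your argument is precisely the standard one that reference contains, so there is nothing to add or correct.
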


For the proof of this lemma see, for example, Proposition~18.4 \cite{Sch1985}.

The next corollary and Lemma~\ref{l4.6} are modifications of Corollary~4.5 and, respectively, of Lemma~4.6 from~\cite{DS2022TA} to the case of closed balls.

\begin{corollary}\label{c4.4}
Let \((X, d)\) be an ultrametric space. Then, for all \(\ol{B}_{r_1}(c_1)\), \(\ol{B}_{r_2}(c_2) \in \bB_{X}\), we have
\begin{equation}\label{c4.4:e1}
\ol{B}_{r_1}(c_1) \subseteq \ol{B}_{r_2}(c_2)
\end{equation}
whenever \(\ol{B}_{r_1}(c_1) \cap \ol{B}_{r_2}(c_2) \neq \varnothing\) and \(0 \leqslant r_1 \leqslant r_2 < \infty\). In particular, for every \(r \geqslant 0\) and all \(\ol{B}_{r}(c_1)\), \(\ol{B}_{r}(c_2) \in \bB_{X}\),  we have
\begin{equation}\label{c4.4:e2}
\ol{B}_{r}(c_1) = \ol{B}_{r}(c_2)
\end{equation}
whenever \(\ol{B}_{r}(c_1) \cap \ol{B}_{r}(c_2) \neq \varnothing\).
\end{corollary}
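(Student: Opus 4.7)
The plan is to reduce everything to Lemma~\ref{l4.3}, which says that every point of a closed ultrametric ball serves as a center with the same radius. Once that is available, the corollary becomes a one-line observation together with the obvious monotonicity of balls around a fixed center.

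For the first assertion, I would pick any point $a \in \ol{B}_{r_1}(c_1) \cap \ol{B}_{r_2}(c_2)$, which is nonempty by hypothesis. Applying Lemma~\ref{l4.3} to each ball separately gives $\ol{B}_{r_1}(c_1) = \ol{B}_{r_1}(a)$ and $\ol{B}_{r_2}(c_2) = \ol{B}_{r_2}(a)$. The inclusion \eqref{c4.4:e1} then follows from the evident monotonicity
\[
\ol{B}_{r_1}(a) = \{x \in X \colon d(a, x) \leqslant r_1\} \subseteq \{x \in X \colon d(a, x) \leqslant r_2\} = \ol{B}_{r_2}(a),
\]
which uses only $r_1 \leqslant r_2$ and the definition of a closed ball.

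For the second assertion, I would simply specialize the first to the case $r_1 = r_2 = r$, obtaining $\ol{B}_{r}(c_1) \subseteq \ol{B}_{r}(c_2)$. By the symmetry of the intersection hypothesis, the same argument with the roles of $c_1$ and $c_2$ swapped yields the reverse inclusion, and \eqref{c4.4:e2} follows.

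There is essentially no obstacle here; the entire content is packaged inside Lemma~\ref{l4.3}, and the only thing to be careful about is to invoke that lemma separately for each of the two balls at the common point $a$, rather than trying to relate $c_1$ and $c_2$ directly. No use of the strong triangle inequality is needed beyond what is already hidden in Lemma~\ref{l4.3}.
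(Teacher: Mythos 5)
Your proof is correct and rests on the same key ingredient as the paper's, namely Lemma~\ref{l4.3} applied at a common point $a$ together with the monotonicity $\ol{B}_{r_1}(a) \subseteq \ol{B}_{r_2}(a)$; the only difference is organizational, since the paper first establishes the equal-radius equality \eqref{c4.4:e2} and then deduces the inclusion \eqref{c4.4:e1} from it via $\ol{B}_{r_2}(c_1) = \ol{B}_{r_2}(c_2)$, whereas you prove the inclusion directly and obtain \eqref{c4.4:e2} as the special case $r_1 = r_2$. Both routes are equally valid and essentially the same argument.
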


\begin{proof}
Let \(\ol{B}_r(c_1) \cap \ol{B}_r(c_2) \neq \varnothing\) and let \(r \geqslant 0\) be fixed. Then, by Lemma~\ref{l4.3}, we obtain the equalities 
\[
\ol{B}_r(c_1) = \ol{B}_r(a)\quad \text{and} \quad \ol{B}_r(c_2) = \ol{B}_r(a)
\]
for every \(a \in \ol{B}_r(c_1) \cap \ol{B}_r(c_2)\). Equality~\eqref{c4.4:e2} follows.

If we have \(0 \leqslant r_1 \leqslant r_2 < \infty\) and \(\ol{B}_{r_1}(c_1) \cap \ol{B}_{r_2}(c_2) \neq \varnothing\), then \eqref{c4.4:e2} implies the equality \(\ol{B}_{r_2}(c_1) = \ol{B}_{r_2}(c_2)\). Now \eqref{c4.4:e1} follows from the last equality and the inclusion \(\ol{B}_{r_1}(c_1) \subseteq \ol{B}_{r_2}(c_1)\).
\end{proof}

\begin{lemma}\label{l4.6}
Let \((X, d) \in \mathbf{UM}\) and let \(r \geqslant 0\) be fixed. Then there is a set \(C \subseteq X\) such that
\begin{equation}\label{l4.6:e1}
X = \bigcup_{c \in C} \ol{B}_r(c)
\end{equation}
and \(\ol{B}_r(c_1) \cap \ol{B}_r(c_2) = \varnothing\) whenever \(c_1\) and \(c_2\) are distinct points of \(C\).
\end{lemma}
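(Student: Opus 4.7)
The plan is to exhibit $C$ as a set of representatives (a transversal) for the partition of $X$ into closed $r$-balls. The substantive content is already done by Corollary~\ref{c4.4}: any two closed balls of the common radius $r$ are either equal or disjoint. Consequently, the family
\[
\mathcal{F} := \{\ol{B}_r(x) : x \in X\}
\]
is a partition of $X$. Indeed, every point $x$ lies in $\ol{B}_r(x) \in \mathcal{F}$, which gives that $\mathcal{F}$ covers $X$, and \eqref{c4.4:e2} shows that two members of $\mathcal{F}$ that intersect must coincide, so distinct members of $\mathcal{F}$ are pairwise disjoint.

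Given this partition, I would apply the axiom of choice to pick, for every distinct block $B \in \mathcal{F}$, a single point $c_B \in B$, and set $C := \{c_B : B \in \mathcal{F}\} \subseteq X$. To verify \eqref{l4.6:e1}, take an arbitrary $x \in X$; then $\ol{B}_r(x)$ is some block $B \in \mathcal{F}$, and by Lemma~\ref{l4.3} we have $\ol{B}_r(x) = \ol{B}_r(c_B)$, so $x \in \ol{B}_r(c_B)$, placing $x$ in the union on the right-hand side. To verify the disjointness condition, suppose $c_1 \neq c_2$ are two elements of $C$; by construction they are the chosen representatives of two different blocks of $\mathcal{F}$, so $\ol{B}_r(c_1)$ and $\ol{B}_r(c_2)$ are distinct members of $\mathcal{F}$ and therefore disjoint by Corollary~\ref{c4.4}.

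There is essentially no obstacle: all the metric content has been extracted into Corollary~\ref{c4.4}, and what remains is the routine formal step of selecting a transversal for a partition, which relies only on the axiom of choice. If one wishes to avoid an explicit invocation of choice in the write-up, one can equivalently describe $C$ via Zorn's lemma applied to the collection of subsets $C' \subseteq X$ whose members lie in pairwise distinct $r$-balls; a maximal such $C'$ must then meet every block of $\mathcal{F}$, because otherwise any point of an unmet block could be adjoined to $C'$, contradicting maximality.
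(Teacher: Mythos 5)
Your proof is correct and follows essentially the same route as the paper: both arguments use Corollary~\ref{c4.4} to see that the closed $r$-balls partition $X$ (the paper phrases this via an explicit equivalence relation $x_1 \myeq x_2 \Leftrightarrow \ol{B}_r(x_1) \cap \ol{B}_r(x_2) \neq \varnothing$, whose classes are exactly your blocks) and then take $C$ to be a system of distinct representatives. The difference is purely presentational.
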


\begin{proof}
Let us define a binary relation \(\myeq\) on the set \(X\) as
\[
(x_1 \myeq x_2) \Leftrightarrow (\ol{B}_r(x_1) \cap \ol{B}_r(x_2) \neq \varnothing).
\]
It is clear that the relation \(\myeq\) is reflexive and symmetric,
\[
x \myeq x, \quad \text{and} \quad (y \myeq z) \Leftrightarrow (z \myeq y)
\]
for all \(x\), \(y\), \(z \in X\). Moreover, Corollary~\ref{c4.4} implies the the transitivity of \(\myeq\). Hence, \(\myeq\) is an equivalence relation on \(X\).

Using the well-known one-to-one correspondence between the equivalence relations and partitions (see, for example, \cite[Chapter~II, \S~5]{KurMost}), we can find a partition \(\widetilde{X} = \{X_i \colon i \in I\}\) of the set \(X\) generated by relation \(\myeq\). It means that, for all \(a\), \(b \in X\), we have \(a \myeq b\) iff there is \(i \in I\) such that \(a\), \(b \in X_i\) and
\begin{equation*}
\bigcup_{i \in I} X_i = X, \quad X_i \cap X_j = \varnothing \quad \text{and} \quad X_k \neq \varnothing
\end{equation*}
whenever \(i\), \(j\), \(k \in I\) and \(i \neq j\). Now if we define \(C \subseteq X\) as a system of distinct representatives for \(\widetilde{X}\),
\[
C = \{c_i \colon i \in I \text{ and } c_i \in X_i\},
\]
then, using Corollary~\ref{c4.4}, we obtain the equality
\begin{equation}\label{l4.6:e3}
\ol{B}_r(c_i) = \bigcup_{c \in X_i} \ol{B}_r(c)
\end{equation}
for every \(i \in I\). In addition, from the definition of \(\myeq\) it follows that \(\ol{B}_r(c_{i_1})\) and \(\ol{B}_r(c_{i_2})\) are disjoint for all distinct \(i_1\), \(i_2 \in I\). To complete the proof it suffices to note that \eqref{l4.6:e1} follows from the definition of \(\widetilde{X}\) and \eqref{l4.6:e3}.
\end{proof}

\begin{theorem}\label{t2.9}
Let \(G\) be a bipartite graph with fixed parts \(A\) and \(B\). Then the following statements are equivalent:
\begin{enumerate}
\item \label{t2.9:s1} Either \(G\) is nonempty and \(G'\) is the disjoint union of complete bipartite graphs, or \(G\) is empty, but the sets \(A\) and \(B\) are infinite.
\item \label{t2.9:s2} \(G\) is proximinal for an ultrametric space \((X, d)\) with \(X = A \cup B\).
\item \label{t2.9:s3} \(G\) is proximinal for an ultrametric space.
\end{enumerate}
\end{theorem}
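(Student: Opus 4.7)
I would prove the cycle \ref{t2.9:s1} $\Rightarrow$ \ref{t2.9:s2} $\Rightarrow$ \ref{t2.9:s3} $\Rightarrow$ \ref{t2.9:s1}. The implication \ref{t2.9:s2} $\Rightarrow$ \ref{t2.9:s3} is immediate from the definitions, so the real work is the remaining two directions: realizing a combinatorially prescribed $G$ by an ultrametric on $A \cup B$, and extracting the structure \ref{t2.9:s1} from the existence of any ultrametric realization.

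\textbf{Implication \ref{t2.9:s3} $\Rightarrow$ \ref{t2.9:s1}.} Suppose $G = G_{X, d}(A, B)$ for some $(X, d) \in \mathbf{UM}$. Since an ultrametric is in particular a semimetric, Theorem~\ref{t2.1} gives that either $G$ is nonempty or $G$ is empty with $A, B$ infinite; in the latter case \ref{t2.9:s1} already holds. Suppose $G$ is nonempty; I would show that each connected component $H$ of $G'$ is complete bipartite. Writing $V(H) = A_H \cup B_H$ with $A_H \subseteq A$, $B_H \subseteq B$, and fixing $a \in A_H$, $b \in B_H$, connectedness of $H$ produces a path $a = x_0, x_1, \ldots, x_k = b$ in $H$. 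Each edge of this path satisfies $d(x_i, x_{i+1}) = \dist(A, B)$ by Definition~\ref{d1.5}, and induction via the strong triangle inequality yields
\[
d(a, b) \leqslant \max_{0 \leqslant i \leqslant k-1} d(x_i, x_{i+1}) = \dist(A, B).
\]
The reverse inequality is immediate from $a \in A$, $b \in B$. Hence $\{a, b\} \in E(G)$, and since both endpoints lie in $V(H)$, this edge belongs to $H$, so $H$ is complete bipartite.

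\textbf{Implication \ref{t2.9:s1} $\Rightarrow$ \ref{t2.9:s2}.} If $G$ is empty with $A, B$ infinite, then the ultrametric $d_1$ produced in the proof of Theorem~\ref{t2.1} already realizes $G$ on $X = A \cup B$. Assume instead that $G$ is nonempty and $G'$ is the disjoint union of complete bipartite graphs $\{G_i\}_{i \in I}$. I would let $\mathcal{F}$ be the partition of $X := A \cup B$ whose blocks are the vertex sets $V(G_i)$ together with a singleton $\{v\}$ for every isolated vertex $v$ of $G$, and define
\[
d(x, y) = \begin{cases} 0 & \text{if } x = y, \\ 1 & \text{if } x \neq y \text{ and } x, y \text{ lie in a common block of } \mathcal{F}, \\ 2 & \text{otherwise.} \end{cases}
\]
The only nontrivial case of the strong triangle inequality is when $d(x, y) = 2$: if both $d(x, z) \leqslant 1$ and $d(z, y) \leqslant 1$, then the transitivity of "lying in a common block" forces $x, y$ into the same block, contradicting $d(x, y) = 2$. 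Finiteness of $D(X, d)$ makes every nonempty subset, in particular $A$ and $B$, proximinal; one checks $\dist(A, B) = 1$; and the equivalence \eqref{d1.5:e1} follows since for $a \in A$, $b \in B$, the equality $d(a, b) = 1$ is equivalent to $a, b$ belonging to a common block of $\mathcal{F}$, which (as $a \neq b$) must be some $V(G_i)$, whence $\{a, b\} \in E(G_i) \subseteq E(G)$ by completeness of $G_i$, and conversely.

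\textbf{Main obstacle.} The crux of the argument is identifying the correct combinatorial invariant, namely that the strong triangle inequality collapses any path in $G'$ to a single edge, thereby forcing each component of $G'$ to be complete bipartite. Once this is spotted, the \ref{t2.9:s3} $\Rightarrow$ \ref{t2.9:s1} direction is a two-line path calculation, and the converse realization reduces to the transparent three-valued ultrametric above; the only minor nuisance is correctly incorporating isolated vertices, which the singleton-extension of the component partition handles cleanly.
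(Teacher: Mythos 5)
Your proposal is correct, and the realization direction \ref{t2.9:s1} $\Rightarrow$ \ref{t2.9:s2} is essentially the paper's construction: the same three-valued ultrametric taking the value \(1\) inside each block \(V(G_i)\) and \(2\) across blocks (your singleton blocks for isolated vertices produce literally the same function as the paper's ``\(2\) otherwise'' clause), with the empty case delegated to the ultrametric \(d_1\) from Theorem~\ref{t2.1}. Where you genuinely diverge is in \ref{t2.9:s3} $\Rightarrow$ \ref{t2.9:s1}. The paper sets \(r = \dist(A, B)\), invokes Lemma~\ref{l4.6} and Corollary~\ref{c4.4} to partition \(X\) into pairwise disjoint closed balls of radius \(r\), singles out the balls meeting both \(A\) and \(B\), proves that \(V(G')\) is exactly the union of those balls, and then applies Corollary~\ref{c2.10} to the restriction of \(d\) to each such ball to conclude that each piece is complete bipartite. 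You instead work purely combinatorially on the connected components of \(G'\) and collapse any path to a single edge via the strong triangle inequality — the same calculation that drives Corollary~\ref{c2.10}, but applied directly to a component rather than to a ball. Your route is shorter and avoids the ball machinery (and the paper's preliminary reduction to \(X = A \cup B\)) entirely; the paper's route buys the extra structural information that the components of \(G'\) are precisely the traces of the closed balls \(\ol{B}_r(c)\) that meet both parts, which is of independent interest but not needed for the equivalence as stated. One cosmetic caveat affecting both arguments equally: the paper's definition of ``disjoint union'' requires at least two summands, so when \(G'\) is connected the phrase in statement \ref{t2.9:s1} must be read as including the degenerate one-block case — your component decomposition handles this exactly as well (or as loosely) as the paper's indexed family \(\{G_i\}_{i \in I}\) with possibly \(|I| = 1\).
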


\begin{proof}
\(\ref{t2.9:s1} \Rightarrow \ref{t2.9:s2}\). Let \ref{t2.9:s1} hold. If \(G\) is empty, but \(A\) and \(B\) are infinite, then it was shown in the proof of Theorem~\ref{t2.1} that \(G = G_{X, d_1}(A, B)\) when \(X = A\cup B\) and \(d_1\) is the ultrametric defined by formula~\eqref{t2.1:e1}.

Let us consider the case of nonempty \(G\). Then there is a family \(\mathcal{F} = \{G_i \colon i \in I\}\) (where \(I\) is a set of indexes) such that every \(G_i\) is a complete bipartite graph with parts \(A_i = A \cap V(G_i)\) and \(B_i = B \cap V(G_i)\), and
\begin{gather}
\label{t2.9:e1}
V(G') = \bigcup_{i \in I} V(G_i),\\
\label{t2.9:e2}
E(G') = \bigcup_{i \in I} E(G_i),\\
\label{t2.9:e3}
V(G_{i_1}) \cap V(G_{i_2}) = \varnothing
\end{gather}
for all different \(i_1\), \(i_2 \in I\). Write \(X := A \cup B\) and define \(d_2 \colon X \times X \to [0, \infty)\) as
\[
d_2(x, y) = \begin{cases}
0 & \text{if } x = y,\\
1 & \text{if \(x \neq y\) and there is \(i \in I\) such that \(x\), \(y \in V(G_i)\)} ,\\
2 & \text{otherwise}.
\end{cases}
\]
Reasoning similar (but much simpler) to those used in the proof of Theorem~\ref{t2.1} show that \((X, d_2) \in \mathbf{UM}\) and \(G = G_{X, d_2}(A, B)\).

\(\ref{t2.9:s2} \Rightarrow \ref{t2.9:s3}\). This implication is evidently valid.

\(\ref{t2.9:s3} \Rightarrow \ref{t2.9:s1}\). Let \(G = G_{X, d}(A, B)\) hold for \((X, d) \in \mathbf{UM}\). If \(G\) is empty, then the sets \(A\) and \(B\) are infinite by Theorem~\ref{t2.1}. Let us consider now the case when \(E(G) \neq \varnothing\).

It follows directly from Definitions~\ref{d1.1}, \ref{d1.2} and \ref{d1.5} that, for \(Z \subseteq X\), the graph \(G\) is proximinal for the ultrametric space \((Z, d|_{Z \times Z})\) whenever \(A \cup B \subseteq Z\), where \(d|_{Z \times Z}\) is the restriction of \(d \colon X \times X \to [0, \infty)\) on \(Z \times Z\). Thus, without loss of generality, we assume \(X = A \cup B\). Write \(r := \dist (A, B)\). Using Lemma~\ref{l4.6}, we can find \(C \subseteq X\) such that
\begin{equation}\label{t2.9:e4}
X = \bigcup_{c \in C} \ol{B}_r(c)
\end{equation}
and 
\begin{equation}\label{t2.9:e5}
\ol{B}_r(c_1) \cap \ol{B}_r(c_2) = \varnothing
\end{equation}
whenever \(c_1\), \(c_2\) are different points of \(C\). 

Let us define \(C_1 \subseteq C\) by the rule:
\begin{itemize}
\item a point \(c \in C\) belongs to \(C_1\) iff both sets \(\ol{B}_r(c) \cap A\) and \(\ol{B}_r(c) \cap B\) are nonempty.
\end{itemize}
We claim that the equality
\begin{equation}\label{t2.9:e6}
V(G') = \bigcup_{c \in C_1} \ol{B}_r(c)
\end{equation}
holds and, moreover, the membership relation \(\{a, b\} \in E(G')\) is valid iff there is \(c \in C_1\) such that \(a\), \(b \in \ol{B}_r(c)\) and 
\begin{equation}\label{t2.9:e6.1}
\{a, b\} \cap A \neq \varnothing \neq \{a, b\} \cap B. 
\end{equation}

Let us prove equality~\eqref{t2.9:e6}. If \(a \in V(G')\), then there is \(b \in V(G')\) such that \(\{a, b\} \in E(G)\) (see Remark~\ref{r1.5}). Consequently, 
\begin{equation}\label{t2.9:e7}
d(a, b) = \dist (A, B) = r
\end{equation}
holds. Using \eqref{t2.9:e4} and \eqref{t2.9:e5}, we can find a unique \(c \in C\) such that \(a \in \ol{B}_r(c)\). Corollary~\ref{c4.4} implies the equality
\begin{equation}\label{t2.9:e8}
\ol{B}_r(c) = \ol{B}_r(a).
\end{equation}
The definition of closed balls, \eqref{t2.9:e7} and \eqref{t2.9:e8} imply that \(\{a, b\} \subseteq \ol{B}_r(a)\). Now from \(\{a, b\} \in E(G)\) and \eqref{t2.9:e8} follow \(c \in C_1\). Hence, we have the inclusion
\begin{equation}\label{t2.9:e9}
V(G') \subseteq \bigcup_{c \in C_1} \ol{B}_r(c).
\end{equation}
To prove the converse inclusion,
\begin{equation}\label{t2.9:e10}
V(G') \supseteq \bigcup_{c \in C_1} \ol{B}_r(c),
\end{equation}
we consider arbitrary points \(c^* \in C_1\) and \(x^* \in \ol{B}_r(c^*)\). By definition of \(C_1\), we can find \(a^* \in A\) and \(b^* \in B\) such that \(a^*\) and \(b^*\) belong to \(\ol{B}_r(c^*)\). We may assume first that \(x^* \in A\). (Recall that \(X\) is the union of disjoint sets \(A\) and \(B\).) Then we obtain
\begin{equation}\label{t2.9:e11}
\dist (A, B) \leqslant d(x^*, b^*)
\end{equation}
and, by strong triangle inequality,
\begin{equation}\label{t2.9:e12}
d(x^*, b^*) \leqslant \max \bigl\{d(c^*, x^*), d(c^*, b^*)\bigr\} \leqslant r = \dist (A, B).
\end{equation}
Hence, the equality \(d(x^*, b^*) = \dist(A, B)\) holds, i.e., \(x^*\) belongs to \(V(G')\). The case when \(x^* \in B\) can be considered similarly. Thus, for every \(c^* \in C_1\) and every \(x^* \in \ol{B}_r(c^*)\), we have \(x^* \in V(G')\), that implies~\eqref{t2.9:e10}. Now~\eqref{t2.9:e6} follows from \eqref{t2.9:e9} and \eqref{t2.9:e10}.

Let us prove the validity of the equivalence 
\begin{equation}\label{t2.9:e13}
\bigl(\{a, b\} \in E(G')\bigr) \Leftrightarrow \bigl(\exists c \in C_1 \text{ such that } a, b \in \ol{B}_r(c) \text{ and (\ref{t2.9:e6.1}) holds}\bigr).
\end{equation}
If \(\{a, b\} \in E(G')\), then we evidently have \(b \in \ol{B}_r(a)\). By Lemma~\ref{l4.6}, the point \(a\) belong to \(\ol{B}_r(c)\) for some \(c \in C_1\). That implies \(\ol{B}_r(c) = \ol{B}_r(a)\) by Corollary~\ref{c4.4}.

Conversely, if (\ref{t2.9:e6.1}) holds and there is \(c \in C_1\) such that \(a\), \(b \in \ol{B}_r(c)\), then
\[
\dist(A, B) \leqslant d(a, b) \leqslant \max \bigl\{d(c, a), d(c, b)\bigr\} \leqslant r = \dist(A, B),
\]
that implies the equality
\[
d(a, b) = \dist (A, B).
\]
The membership \(\{a, b\} \in E(G')\) is valid.

To complete the proof, it suffices to show that \(G'\) is the disjoint union of complete bipartite graphs. Let us consider an arbitrary point \(c \in C_1\) and define an ultrametric space \((Y_c, \rho_c)\) as
\[
Y_c := \ol{B}_r(c) \quad \text{and} \quad \rho_c := d|_{\ol{B}_r(c) \times \ol{B}_r(c)},
\]
where \(d|_{\ol{B}_r(c) \times \ol{B}_r(c)}\) is the restriction of the ultrametric \(d\) on the closed ball \(\ol{B}_r(c)\), and write
\[
K_c = A \cap \ol{B}_r(c), \quad D_c = B \cap \ol{B}_r(c).
\]
Then \(K_c\) and \(D_c\) are disjoint proximinal subsets in \((Y_c, \rho_c)\). Let \(G_c\) be a graph such that \(V(G_c) = Y_c\) and 
\[
\bigl(\{a, b\} \in E(G_c)\bigr) \Leftrightarrow \bigl(\rho_c(a, b) = \dist(K_c, D_c)\bigr)
\]
is valid for all \(a\), \(b \in V(G_c)\). The graph \(G_c\) is proximinal for \((Y_c, \rho_c) \in \mathbf{UM}\) and has the parts \(K_c\) and \(D_c\). By Corollary~\ref{c2.10}, the graph \(G_c\) is a complete bipartite graph. Now from~\eqref{t2.9:e6} and \eqref{t2.9:e13} it follows that \(G'\) is the disjoint union of the graphs \(G_c\), \(c \in C_1\).
\end{proof}

\begin{corollary}\label{c2.13}
The following statements are equivalent for every nonempty graph \(G\):
\begin{enumerate}
\item\label{c2.13:s1} \(G'\) is the disjoint union of complete bipartite graphs.
\item\label{c2.13:s2} There is \((X, d) \in \mathbf{UM}\) such that \(G\) is isomorphic to a proximinal graph for \((X, d)\).
\end{enumerate}
\end{corollary}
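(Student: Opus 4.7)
The strategy is to reduce Corollary~\ref{c2.13} to Theorem~\ref{t2.9} directly. The differences between the two statements are only cosmetic: in Theorem~\ref{t2.9} the graph $G$ is given with a fixed bipartition, while Corollary~\ref{c2.13} speaks of an arbitrary nonempty graph up to isomorphism. I will handle these by choosing an appropriate bipartition of $V(G)$ in one direction and by transferring the structure along the isomorphism in the other.

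For \ref{c2.13:s1} $\Rightarrow$ \ref{c2.13:s2}, I would start by writing $G'$ as a disjoint union of complete bipartite graphs $G_i$, $i \in I$, and letting $A_i$, $B_i$ denote the corresponding parts. Set $A_* := \bigcup_{i \in I} A_i$, $B_* := \bigcup_{i \in I} B_i$, and let $J := V(G) \setminus V(G')$ be the set of isolated vertices of $G$. Putting $A := A_* \cup J$ and $B := B_*$ exhibits $G$ as a bipartite graph with fixed nonempty parts $A$, $B$, because every edge of $G$ lies in $G'$ and hence joins $A_*$ to $B_*$; moreover the subgraph induced on the non-isolated vertices of this bipartite $G(A, B)$ is precisely the prescribed disjoint union of complete bipartite graphs. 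Statement \ref{t2.9:s1} of Theorem~\ref{t2.9} is thus satisfied, and Theorem~\ref{t2.9} supplies $(X, d) \in \mathbf{UM}$ for which $G$ itself (hence a graph isomorphic to $G$ via the identity map) is proximinal.

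For \ref{c2.13:s2} $\Rightarrow$ \ref{c2.13:s1}, suppose $\Phi \colon V(G) \to V(H)$ is an isomorphism, where $H = H_{X, d}(A, B)$ is a proximinal graph for some $(X, d) \in \mathbf{UM}$. Since $G$ is nonempty, so is $H$, and statement \ref{t2.9:s1} of Theorem~\ref{t2.9} ensures that $H'$ is a disjoint union of complete bipartite graphs. A graph isomorphism preserves adjacency and therefore also the property of a vertex being isolated; consequently $\Phi$ restricts to a graph isomorphism $G' \to H'$, and since being a disjoint union of complete bipartite graphs is invariant under graph isomorphism, $G'$ inherits this structure from $H'$.

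I do not anticipate any real obstacle. The single point requiring attention is the assignment of the isolated vertices of $G$ in the forward direction: the choice $A = A_* \cup J$, $B = B_*$ absorbs all of them into one part while keeping both parts nonempty, which is exactly what is needed so that $G(A, B)$ is a legitimate bipartite graph with fixed parts to which Theorem~\ref{t2.9} applies.
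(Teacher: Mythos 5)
Your argument is correct and matches the paper's intended route: Corollary~\ref{c2.13} is derived directly from Theorem~\ref{t2.9}, with the forward direction handled by choosing a bipartition of \(V(G)\) (absorbing the isolated vertices into one part) and the converse by transporting the structure of \(H'\) back along the isomorphism. The only detail worth double-checking, which you did, is that both parts \(A = A_* \cup J\) and \(B = B_*\) are nonempty and that every edge of \(G\) indeed joins them.
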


\begin{example}\label{ex2.14}
Let \(f \colon [0, \infty) \to [0, \infty)\) and \(g \colon [0, \infty) \to [0, \infty)\) satisfy \(f(0) = g(0) = 0\) and \(f(x) > 0\), \(g(x) > 0\) for every \(x > 0\). Let us consider a semimetric space \((X, d)\) such that \(X = \CC\), where \(\CC\) is the set of all complex numbers \(z = x + iy\), and, for all \(z_1 = x_1 + iy_1\), \(z_2 = x_2 + iy_2\), 
\[
d(z_1, z_2) = f(|x_1 - x_2|) + g(|y_1 - y_2|).
\]
Write
\[
A = \{x + iy \in \CC \colon y = 0\} \quad \text{and} \quad B = \{x + iy \in \CC \colon y = 1\}.
\]
Then \((A, B)\) is a disjoint proximinal pair for \((X, d)\) and the equalities \(\dist(A, B) = g(1)\) and \(A_0 = A\), \(B_0 = B\) hold. Moreover, for every \(z_1 = x_1 + iy_1 \in \CC\), the point \(x_1\) is the unique best approximation to \(z_1\) in \(A\) and \(x_1 + i\) is the unique best approximation to \(z_1\) in \(B\).

Let us define a graph \(G\) as
\[
V(G) := A \cup B \quad \text{and} \quad E(G) := \bigl\{\{x, x+i\} \colon x \in \RR\bigr\},
\]
where \(\RR\) is the set of all real numbers. Then \(G\) is a proximinal graph for the semimetric space \((X, d)\). Since \(G\) is a disjoint union of complete two-point graphs \(K_2\), by Theorem~\ref{t2.9}, there is an ultrametric space \((Y, \rho)\) such that \(G\) is proximinal for \((Y, \rho)\) and has the parts \(A\) and~\(B\).
\end{example}

\begin{remark}\label{r2.5}
Ultrametric \(d_1\), that we use in the proof of Theorem~\ref{t2.1}, is obtained by ``blow up'' of the ultrametric from Example~2.2 of paper \cite{CDL2021pNUAA}. Ultrametrics of this type were first constructed by Delhomm\'{e}, Laflamme, Pouzet and Sauer \cite[Proposition 2]{ref201}. Similar constructions are often useful in the study of various topological and geometrical properties of ultrametric spaces \cite{BDS2021pNUAA, DDP2011pNUAA, DS2022TA, ref205} and have a natural generalization to the Priess---Crampe and Ribenboim Ultrametrics with totally ordered range sets (see \cite[Proposition 4.10]{ref206}).
\end{remark}

\section{From proximinal to farthest and back}

In the second section of the paper, we considered the bipartite graphs \(G(A, B)\), whose parts \(A\) and \(B\) are disjoint proximinal subsets of a semimetric space \((X, d)\), and vertices \(a \in A\) and \(b \in B\) are adjacent if and only if 
\begin{equation*}
d(a, b) = \inf_{\substack{x \in A \\ y \in B}} d(x, y).
\end{equation*}
Below we discuss the bipartite graphs \(G(A, B)\), whose parts \(A\) and \(B\) are arbitrary disjoint nonempty subsets of a semimetric space \((X, d)\) such that \(a \in A\) and \(b \in B\) are adjacent iff
\begin{equation}\label{e3.2}
d(a, b) = \sup_{\substack{x \in A \\ y \in B}} d(x, y)
\end{equation}
and show that these graphs are isomorphic to proximinal graphs. Let us start with a formal definition.

\begin{definition}\label{d3.1}
A graph \(G\) is \emph{farthest} if \(G\) is a bipartite graph with some fixed parts \(A\) and \(B\), and there is \((X, d) \in \mathbf{SM}\) such that \(A\) and \(B\) are disjoint subsets of \(X\), and vertices \(a \in A\) and \(b \in B\) are adjacent iff \eqref{e3.2} holds. In this case we say that \(G\) is farthest for \((X, d)\).
\end{definition}

The following theorem is an analog of Theorem~\ref{t2.1}.

\begin{theorem}\label{t3.2}
Let \(G\) be a bipartite graph with fixed parts \(A\) and \(B\). Then the following statements are equivalent:
\begin{enumerate}
\item \label{t3.2:s1} Either \(G\) is nonempty or \(G\) is empty but at least one from the parts \(A\), \(B\) is infinite.
\item \label{t3.2:s2} \(G\) is farthest for a metric space.
\item \label{t3.2:s3} \(G\) is farthest for a semimetric space.
\end{enumerate}
\end{theorem}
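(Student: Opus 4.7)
The proof closely parallels that of Theorem~\ref{t2.1}, with two modifications: (a) the roles of infimum and supremum are reversed throughout, and (b) in the empty case, only one of the parts need be infinite. The plan is to establish the three implications in the cycle $\ref{t3.2:s1} \Rightarrow \ref{t3.2:s2} \Rightarrow \ref{t3.2:s3} \Rightarrow \ref{t3.2:s1}$.

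For $\ref{t3.2:s1} \Rightarrow \ref{t3.2:s2}$ when $G$ is nonempty, I would set $X = A \cup B$ and define
\[
d(x, y) = \begin{cases} 0 & \text{if } x = y, \\ 2 & \text{if } \{x, y\} \in E(G), \\ 1 & \text{otherwise}. \end{cases}
\]
This is a metric: any two nonzero values sum to at least $2 \geqslant d(x, y)$, so the triangle inequality holds. Since $G$ is bipartite, edges lie only in $A \times B$, so $\sup_{x \in A, y \in B} d(x, y) = 2$, attained on precisely those pairs that are edges, showing that $G$ is farthest for $(X, d)$.

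When $G$ is empty, assume without loss of generality $|A| \geqslant |B|$ so that $A$ is infinite. Mimicking formula~\eqref{t2.1:e1}, I would pick a surjection $\Phi \colon A \to \NN$ and on $X = A \cup B$ define
\[
d(x, y) = \begin{cases} 0 & \text{if } x = y, \\ 1 & \text{if } x \neq y \text{ and } \{x, y\} \subseteq A \text{ or } \{x, y\} \subseteq B, \\ 2 - 1/\Phi(a) & \text{if } \{x, y\} = \{a, b\} \text{ with } a \in A,\ b \in B. \end{cases}
\]
All nonzero values of $d$ lie in $[1, 2)$, so once again any two of them sum to at least $2$ and the triangle inequality reduces to a short case check. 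Surjectivity of $\Phi$ forces $\sup_{a \in A, b \in B} d(a, b) = 2$, yet this supremum is never attained because $1/\Phi(a) > 0$ for every $a \in A$; consequently no pair satisfies~\eqref{e3.2}, and the empty graph $G$ is farthest for $(X, d)$.

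The implication $\ref{t3.2:s2} \Rightarrow \ref{t3.2:s3}$ is immediate since every metric is a semimetric. For $\ref{t3.2:s3} \Rightarrow \ref{t3.2:s1}$, I would argue by contrapositive: if $G$ is empty and both $A$ and $B$ are finite, then $\{d(a, b) : a \in A, b \in B\}$ is a finite subset of $[0, \infty)$, so its supremum is attained by some pair $(a_0, b_0)$, forcing $\{a_0, b_0\} \in E(G)$ by Definition~\ref{d3.1} --- contradicting emptiness. The main obstacle is essentially bookkeeping: verifying the triangle inequality in the empty-case construction. The one conceptual departure from Theorem~\ref{t2.1} is in $\ref{t3.2:s3} \Rightarrow \ref{t3.2:s1}$: the earlier proof used proximinality of $B$ to extract a best approximation for each $a \in A$, which required both parts to be infinite to avoid an attained minimizer; here no such auxiliary structure is available, so a supremum over $A \times B$ can fail to be attained as soon as $A \times B$ is infinite, which is why the hypothesis in \ref{t3.2:s1} is correspondingly weaker.
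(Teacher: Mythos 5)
Your proposal is correct and follows essentially the same route as the paper: the same three-valued metric in the nonempty case, the same contrapositive finiteness argument for \ref{t3.2:s3} \(\Rightarrow\) \ref{t3.2:s1}, and in the empty case a metric with cross-distances filling out \([1,2)\) so that the supremum \(2\) is never attained (the paper indexes these values by a countable partition of the infinite part rather than by a surjection onto \(\NN\), which is the same construction in different notation). No gaps.
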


\begin{proof}
Let us suppose first that \(G\) is a nonempty graph.

\(\ref{t3.2:s1} \Rightarrow \ref{t3.2:s2}\). If \ref{t3.2:s1} holds, then we write
\begin{equation}\label{t3.2:e1}
X = A \cup B
\end{equation}
and define \(d \colon X \times X \to [0, \infty)\) by 
\begin{equation}\label{t3.2:e2}
d(x, y) = \begin{cases}
0 & \text{if } x = y,\\
2 & \text{if } \{x, y\} \in E(G),\\
1 & \text{otherwise}.
\end{cases}
\end{equation}
It follows directly from \eqref{t3.2:e2} that \(d\) is a metric on \(X\) and
\[
\sup_{\substack{x \in A \\ y \in B}} d(x, y) = 2.
\]
The last equality, \eqref{t3.2:e2} and Definition~\ref{d3.1} imply that \(G\) is farthest for the metric space \((X, d)\) and has the parts \(A\) and \(B\).

\(\ref{t3.2:s2} \Rightarrow \ref{t3.2:s3}\). This is valid because every metric space is semimetric.

\(\ref{t3.2:s3} \Rightarrow \ref{t3.2:s1}\). By our supposition, \(G\) is a nonempty bipartite graph, that implies \ref{t3.2:s1}.

Let us consider the case when \(G\) is empty, \(E(G) = \varnothing\). 

\(\ref{t3.2:s1} \Rightarrow \ref{t3.2:s2}\). Let \ref{t3.2:s1} hold. Then, without loss of generality, we may assume that \(B\) is an infinite set. Let us consider a partition \(\{B_i \colon i \in \NN\}\) of the set \(B\) on the disjoint nonempty sets \(B_i\), where \(\NN\) is the set of all positive integers. Let us define the set \(X\) by \eqref{t3.2:e1} and let \(d \colon X \times X \to [0, \infty)\) be a symmetric mapping which satisfies
\begin{equation}\label{t3.2:e3}
d(x, y) = \begin{cases}
0 & \text{if } x = y,\\
1 + \dfrac{i}{i+1} & \text{if } x \in A \text{ and } y \in B_i \text{ for some } i \in \NN,\\
1 & \text{otherwise}.
\end{cases}
\end{equation}
Then \((X, d)\) is a metric space such that
\[
\sup_{\substack{x \in A \\ y \in B}} d(x, y) = \sup_{i \in \NN} \left(1 + \dfrac{i}{i+1}\right) = 2.
\]
In addition, \eqref{t3.2:e3} implies the inequality \(d(x, y) < 2\) for all \(x\), \(y \in X\). Consequently, \(G\) is farthest for \((X, d) \in \mathbf{M}\) and has the parts \(A\) and \(B\).

\(\ref{t3.2:s2} \Rightarrow \ref{t3.2:s3}\). This implication is obviously true.

\(\ref{t3.2:s3} \Rightarrow \ref{t3.2:s1}\). Let \(G\) be a farthest graph for \((X, d) \in \mathbf{SM}\) and let \(A\), \(B \subseteq X\) be the parts of \(G\). We must show that at least one from the sets \(A\) and \(B\) is infinite. Suppose contrary that both sets \(A\) and \(B\) are finite. Then we can find \(a_0 \in A\) and \(b_0 \in B\) such that
\[
d(a_0, b_0) = \sup_{\substack{x \in A \\ y \in B}} d(x, y).
\]
By Definition~\ref{d3.1}, \(\{a_0, b_0\}\) is an edge of \(G\), contrary to \(E(G) = \varnothing\).
\end{proof}

Theorem~\ref{t3.2} gives us the following corollary.

\begin{corollary}\label{c3.3}
A bipartite graph is not isomorphic to any farthest graph if and only if \(G\) is finite and empty.
\end{corollary}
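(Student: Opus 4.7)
My plan is to mimic the proof of Corollary~\ref{c2.2}, using Theorem~\ref{t3.2} in place of Theorem~\ref{t2.1}. The structural analogy is very close; the only real difference is that Theorem~\ref{t3.2} requires only \emph{one} of the two parts to be infinite in the empty case rather than both, which if anything makes the present argument slightly simpler.

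For the direction ``$G$ finite and empty'' $\Rightarrow$ ``$G$ not isomorphic to any farthest graph'', I would argue by contradiction. Suppose $G$ is finite and empty but, nevertheless, is isomorphic via some bijection $\Phi \colon V(G) \to V(H)$ to a farthest graph $H$ with parts $A$ and $B$. Since isomorphisms preserve cardinalities as well as the presence or absence of edges, $H$ is also finite and empty. Theorem~\ref{t3.2} then forces at least one of $A$ and $B$ to be infinite, contradicting $|A| + |B| = |V(H)| = |V(G)| < \infty$.

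For the reverse direction, I would prove the contrapositive: if $G$ is not finite-and-empty, then $G$ is isomorphic to some farthest graph. Since $G$ is bipartite, I first fix any bipartition $V(G) = A \cup B$ into two nonempty disjoint independent sets. If $G$ is nonempty, condition \ref{t3.2:s1} of Theorem~\ref{t3.2} is immediately satisfied for this bipartition. If $G$ is empty but $V(G)$ is infinite, then at least one of $A$ and $B$ is infinite, since $|A| + |B| = |V(G)| = \infty$; again \ref{t3.2:s1} holds. In either case, $G$ with the fixed parts $A$, $B$ is a farthest graph by Theorem~\ref{t3.2}, so $G$ is trivially isomorphic to itself as a farthest graph, contradicting the assumption.

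The main obstacle is purely bookkeeping: one must pass carefully between the notion of ``bipartite graph'' (requiring only the existence of some bipartition) and ``bipartite graph with fixed parts $A$, $B$'' as used in Definition~\ref{d3.1} and Theorem~\ref{t3.2}. Once that distinction is kept straight, both implications reduce to a direct appeal to Theorem~\ref{t3.2}.
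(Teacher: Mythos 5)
Your proof is correct, and it uses the intended tool (Theorem~\ref{t3.2}) in the way the paper's omitted proof envisions; but it is worth recording that your converse direction is organized differently from the paper's model, the proof of Corollary~\ref{c2.2}. There, the ``Conversely'' paragraph assumes \(G\) is isomorphic to a proximinal graph \(H\) and transports the structure back to \(G\) to conclude that \(G\) is not finite-and-empty --- which is the contrapositive of the first implication rather than the remaining one. You instead prove the implication that is actually still needed: assuming \(G\) is not finite-and-empty, you exhibit a farthest graph isomorphic to \(G\) (namely \(G\) itself, after fixing a bipartition and invoking the construction inside Theorem~\ref{t3.2}). This makes your write-up the more complete of the two. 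Your observation about where the bipartition must be chosen carefully is also correct and is the one genuine point of divergence from the proximinal case: for an empty infinite \(G\), \emph{any} bipartition into two nonempty parts has at least one infinite part, which suffices for statement \ref{t3.2:s1} of Theorem~\ref{t3.2}, whereas the analogue for Corollary~\ref{c2.2} requires selecting a bipartition with \emph{both} parts infinite before Theorem~\ref{t2.1} applies. The only blemish is cosmetic: at the end of the second direction you say ``contradicting the assumption'' although you announced a contrapositive argument, not a proof by contradiction; the logic is unaffected.
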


This corollary can be proved similarly to Corollary~\ref{c2.2} and we omit it here.

Using Theorems~\ref{t2.1} and \ref{t3.2} we also obtain the following.

\begin{corollary}\label{c3.4}
If a graph \(G\) is proximinal, then \(G\) is farthest but not conversely, in general.
\end{corollary}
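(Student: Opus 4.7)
The plan is to obtain both halves of the corollary as immediate consequences of Theorems~\ref{t2.1} and~\ref{t3.2}. The key observation is that both notions ``proximinal'' and ``farthest'' are defined for bipartite graphs with \emph{fixed} parts $A$ and $B$, so the comparison must be made keeping the parts fixed, not just up to isomorphism---otherwise the statement would be vacuous in view of Corollaries~\ref{c2.2} and~\ref{c3.3}, which already show that the two classes of isomorphism types coincide.

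For the forward implication, I would suppose $G = G(A,B)$ is proximinal. By Theorem~\ref{t2.1} applied to $(A,B)$, either $G$ is nonempty, or $G$ is empty but both $A$ and $B$ are infinite. In either case, statement~\ref{t3.2:s1} of Theorem~\ref{t3.2} holds for $(A,B)$, since it merely requires at least one of the two parts to be infinite in the empty case. Applying Theorem~\ref{t3.2} then yields that $G$ is farthest (even for a metric space).

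For the ``not conversely'' part, I would exhibit an explicit counterexample at the level of bipartite graphs with fixed parts: take $A$ infinite, take $B$ finite and nonempty with $A \cap B = \varnothing$, and let $G = G(A, B)$ be the empty bipartite graph on these parts. Since $A$ is infinite, Theorem~\ref{t3.2} guarantees that $G$ is farthest for some metric space. On the other hand, Theorem~\ref{t2.1} forbids $G$ from being proximinal for any semimetric space, because the empty-graph clause of that theorem requires \emph{both} parts to be infinite.

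There is no real obstacle here; the argument reduces to a one-line comparison of the two characterization theorems. The only point that needs care in the write-up is the distinction between ``with fixed parts'' and ``up to isomorphism'' just noted, which is what makes the ``not conversely'' assertion meaningful despite Corollaries~\ref{c2.2} and~\ref{c3.3}.
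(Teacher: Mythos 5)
Your argument is correct and is exactly the derivation the paper intends: Corollary~\ref{c3.4} is stated as an immediate consequence of comparing statement \ref{t2.1:s1} of Theorem~\ref{t2.1} (empty case requires \emph{both} parts infinite) with statement \ref{t3.2:s1} of Theorem~\ref{t3.2} (empty case requires only \emph{one} part infinite), and your counterexample with $A$ infinite, $B$ finite, $G$ empty is precisely the witness for the ``not conversely'' clause. Your remark about keeping the parts fixed rather than working up to isomorphism is also the right reading, since Proposition~\ref{p3.4} shows the two classes coincide up to isomorphism.
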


Nevertheless, the proximinal graphs and the farthest ones coincide up to isomorphism.

\begin{proposition}\label{p3.4}
Let \(G\) be a graph. Then \(G\) is isomorphic to a farthest graph iff \(G\) is isomorphic to a proximinal graph.
\end{proposition}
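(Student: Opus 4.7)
The plan is to derive Proposition~\ref{p3.4} as a direct consequence of the characterizations already established in Corollaries~\ref{c2.2} and~\ref{c3.3}. Both of those corollaries, read in parallel, give the same answer to the question of which bipartite graphs arise up to isomorphism: a bipartite graph fails to be isomorphic to a proximinal graph precisely when it is finite and empty, and the same dichotomy holds for farthest graphs. So the two classes of graphs, up to isomorphism, coincide simply because they admit identical obstructions.

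First I would observe that proximinal graphs are bipartite by Definition~\ref{d1.5} and farthest graphs are bipartite by Definition~\ref{d3.1}. Hence, if $G$ is isomorphic to a graph of either type, $G$ itself must be bipartite. This is the only preliminary point that is not entirely mechanical: without it, we could not apply the two corollaries, whose hypotheses require bipartiteness.

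For the forward implication, suppose $G$ is isomorphic to a farthest graph $H$. Then $G$ is bipartite by the observation above. Because $H$ is farthest, Corollary~\ref{c3.3} rules out the possibility that $G$ is both finite and empty. Applying Corollary~\ref{c2.2} to the bipartite graph $G$, which is not finite-and-empty, produces a proximinal graph isomorphic to $G$. The reverse implication is completely symmetric: if $G$ is isomorphic to a proximinal graph, then $G$ is bipartite, Corollary~\ref{c2.2} excludes the finite-and-empty case, and Corollary~\ref{c3.3} supplies a farthest graph isomorphic to $G$.

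The expected main obstacle is essentially nonexistent — all the heavy lifting has been carried out in Theorems~\ref{t2.1} and~\ref{t3.2} and their corollaries, and the proof reduces to stitching together two ``iff not finite-and-empty'' statements through the common intermediate class of bipartite graphs.
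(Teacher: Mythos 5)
Your proof is correct and follows essentially the same route as the paper, which simply cites Corollaries~\ref{c2.2} and~\ref{c3.3}; you merely spell out the details, including the (worthwhile) observation that a graph isomorphic to a farthest or proximinal graph must itself be bipartite so that the corollaries apply.
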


\begin{proof}
It follows from Corollaries~\ref{c2.2} and \ref{c3.3}.
\end{proof}

\begin{example}\label{ex3.5}
Let \(G_1\) be a proximinal graph for \((X_1, d_1) \in \mathbf{SM}\) and let \(G_1\) have parts \(A_1\) and \(B_1\). Let us consider \(\rho_1 \colon X_1 \times X_1 \to [0, \infty)\) defined by
\[
\rho_1(x, y) = \begin{cases}
0 & \text{if } x = y,\\
\dfrac{1}{d_1(x, y)} & \text{if } x \neq y.
\end{cases}
\]
Then \(\rho_1\) is a semimetric on \(X_1\), and \(G_1\) is a farthest graph for \((X_1, \rho_1)\), and \(A_1\), \(B_1\) are the parts of \(G_1\). Moreover, for every \(x \in X_1 \setminus A_1\) and every \(y \in X_1 \setminus B_1\), there are \(a_x^1 \in A_1\) and \(b_y^1 \in B_1\) such that 
\[
\sup_{a \in A_1} \rho_1(x, a) = \rho_1(x, a_x^1) \quad \text{and} \quad \sup_{b \in B_1} \rho_1(y, b) = \rho(y, b_y^1).
\]

Conversely, let \(G_2\) be a farthest graph for \((X_2, \rho_2) \in \mathbf{SM}\), and let \(G_2\) have parts \(A_2\) and \(B_2\). Let us define a semimetric \(d_2\) on \(X_2\) as
\[
d_2(x, y) = \begin{cases}
0 & \text{if } x = y,\\
\dfrac{1}{\rho_2(x, y)} & \text{if } x \neq y.
\end{cases}
\]
Then \(G_2\) is proximinal for \((X_2, d_2)\) with parts \(A_2\) and \(B_2\) if and only if, for every \(x \in X_2 \setminus A_2\) and every \(y \in X_2 \setminus B_2\), there are \(a_x^2 \in A_2\) and \(b_y^2 \in B_2\) such that
\[
\sup_{a \in A_2} \rho_2(x, a) = \rho_2(x, a_x^2) \quad \text{and} \quad \sup_{b \in B_2} \rho_2(y, b) = \rho_2(y, b_y^2).
\]
\end{example}

\bibliographystyle{plain}
\bibliography{biblio2022.01}

\end{document}